\documentclass{amsart}
\usepackage{amscd,amssymb,amsmath,subfigure,hyperref,mathrsfs }
\usepackage[arrow,matrix,graph,frame,poly,arc,tips]{xy}

\topmargin0.1in
\textwidth5.55in
\textheight7.85in
\oddsidemargin=0.4in
\evensidemargin=0.4in

\theoremstyle{plain}
\newtheorem{thm}[subsection]{Theorem}
\newtheorem{prop}[subsection]{Proposition}
\newtheorem{lem}[subsection]{Lemma}

\theoremstyle{definition}

\theoremstyle{remark}

\newcommand{\mc}[1]{\mathcal{#1}}
\newcommand{\mbf}[1]{\mathbf{#1}}
\newcommand{\mfr}[1]{\mathfrak{#1}}
\newcommand{\mr}[1]{\mathrm{#1}}
\newcommand{\mbb}[1]{\mathbb{#1}}

\newcommand{\R}{\mathbb{R}}
\newcommand{\C}{\mathbb{C}}

\newcommand{\Z}{\mathbb{Z}}
\newcommand{\Q}{\mathbb{Q}}

\newcommand{\Proj}[1]{{\bf P}^{#1}}
\newcommand{\Aff}[1]{{\bf A}^{#1}}

\newcommand{\Sg}[1]{{\mathfrak H}_{#1}}

\newcommand{\Sym}[2]{\mr {Sp}_{#1} (#2)}
\newcommand{\sym}[2]{\mr {Sp} (#1,\, #2)}

\newcommand{\SO}[2]{{\mr {SO}} ( #1, \, #2)}

\newcommand{\kmphi}{\boldsymbol {\varphi}^{+}}

\newcommand{\wdg}[1]{\mathop{\bigwedge ^{#1}}}

\newcommand{\coh}[3]{{\mr H}^{#1} (#2 ,\ #3)}
\newcommand{\cohb}[3]{{\mr H}^{#1} (#2 ;\ #3)}

\newcommand{\mat}[3]{\mr{Mat}(#1,\, #2 ;\, #3 )}
\newcommand{\transp}{\phantom{}^t}

\begin{document}

\title[Picard groups of Siegel modular 3-folds and $\theta$-lifting]%
{Picard groups of Siegel modular threefolds and theta lifting}

\author[Hongyu He]{Hongyu He$^1$}
\address{Department of Mathematics, Louisiana State University,
Baton Rouge, LA 70803}
\email{\href{mailto:hongyu@math.lsu.edu}{hongyu@math.lsu.edu}}
\urladdr{\href{http://www.math.lsu.edu/~hongyu/}
{http://www.math.lsu.edu/\~{}hongyu}}

\author[J. W. Hoffman]{Jerome William Hoffman}
\address{Department of Mathematics, Louisiana State University,
Baton Rouge, LA 70803}
\email{\href{mailto:hoffman@math.lsu.edu}{hoffman@math.lsu.edu}}
\urladdr{\href{http://www.math.lsu.edu/~hoffman/}%
{http://www.math.lsu.edu/~hoffman/}}

\thanks{{$^1$ Hongyu He supported in part by NSF contract DMS-0700809}}

\subjclass[2000]{Primary 14G35,
Secondary 11F46, 11F27, 14C22, 11F23
}

\keywords{Siegel modular threefold, Picard group,
theta lifting}

\begin{abstract}
We show that the Humbert surfaces rationally generate the Picard
groups of Siegel modular threefolds. This involves three ingredients:
(1) R. Weissauer's determination of these Picard groups in terms
of theta lifting from cusp forms of weight $5/2$ on
$\tilde{\mr{SL}}_2 (\R ) $ to automorphic forms on
$\mr{Sp}_4 (\R )$. (2) The theory of special cycles due to Kudla/Millson
and Tong/Wang relating cohomology defined by automorphic forms to
that defined by certain geometric cycles. (3) Results of R. Howe
about the structure of the oscillator representation in this situation.
\end{abstract}

\maketitle

\section{Introduction}
\label{S:intro}
\subsection{}
\label{SS:intro1}
Let $\mfr{H}_g$ be the Siegel half space of genus $g$ and let 
$\Gamma \subset \mr{Sp}_{2g}(\Z)$ be a congruence subgroup.
The quotient $X_{\Gamma} := \Gamma \backslash \mfr{H}_g$ is the set
of complex points of a quasi-projective algebraic variety. These 
varieties are of considerable importance in geometry and arithmetic, but 
they are really only well understood for the case $g=1$, the case of
modular curves. Since the nineteenth century one has known how to
compute their Betti numbers. Also in the nineteenth century it was understood that 
their cohomology was related to modular forms: 
$\mr{H}^0 (X_{\Gamma}, \Omega ^1) \subset \mr{H}^1 (X_{\Gamma},  \C)$
is canonically isomorphic to $S_2 (\Gamma )$, the space of cusp forms 
of weight $2$ for $\Gamma$. More recent is the discovery that certain 
special cycles, modular symbols, provide a good set of homology generators, 
these generators having good transformation properties with respect
to the Hecke algebra. Modular symbols are of great practical 
value in computations with modular forms, providing the key to 
the algorithms of William Stein and others that 
are implemented in software systems. Finally, Eichler and Shimura 
proved that the zeta functions of modular curves are expressible 
in terms of $L$-functions of modular forms.

\subsection{}
\label{SS:intro2}
We know much less even for the case $g=2$, Siegel modular threefolds. 
One cannot compute by any 
practical effective algorithm the Betti numbers of these varieties in general. 
Some cases where the computations have been carried out can be found in \cite{LW1},
\cite{LW2}, \cite{HW1}, \cite{HW2}, \cite{HW3}.
Laumon, \cite{gL}, \cite{gL2}, has proved that the zeta functions of Siegel modular
threefolds are expressible in terms of the $L$-functions of 
automorphic representations, but his 
theorem is limited in an important respect: neither side of this equation 
can be computed exactly except in a very small number of cases 
because the expression of those zeta functions involve multiplicities
which are related to the Betti numbers of those varieties. 
It is true that the cohomology can be described in terms of automorphic forms, 
this being a general fact about quotients of symmetric domains by lattices, 
and for $\mr{H}^2$, one has an explicit description, due to Weissauer.

\subsection{}
\label{SS:intro3}
In this paper we study one piece of this  $\mr{H}^2$, namely the Picard group. Geometrically one can view this either as the group of algebraic line bundles, or as the Chow group of codimension one algebraic cycles modulo rational equivalence. We show that these Picard groups are generated by certain special cycles which classically are known as Humbert surfaces. This is based on three key 
facts:
\begin{enumerate}
\item [1.] Weissauer has shown that 
$\mr{Pic}(X_{\Gamma})\otimes \C = \mr{H}^{1, 1}(X_{\Gamma})$ and that all
the cohomology classes in the complement of the canonical 
polarization can be represented by $(\mfr{g}, K)$-cohomology
classes with values in $\theta (\sigma)$, the space of theta lifts
from holomorphic cusp forms $\sigma$ of weight $5/2$ for the group
$\tilde{\mr{SL}_2}(\R)$ to the group $\mr{Sp}_4(\R)\sim \mr{SO}_0 (3, 2)$.
\item [2.] The theory of special cycles, due largely to two groups:
Kudla-Millson and Tong-Wang, asserts a close connection between 
cohomology classes defined by automorphic forms on locally symmetric
varieties and classes defined by certain geometric cycles on those manifolds. In the case at hand, the connection is between theta lifts
of holomorphic cusp forms of weight $5/2$ and algebraic cycles 
which are combinations of transforms of classical Humbert surfaces 
under the Hecke algebra. Here it is crucial that we
are in a stable range: $1 < (3 +2)/4$ (see theorem 
\ref{T:KM2}).    
\item [3.] The main issue is then to see that the {\it general}
theta lifts $\theta (\sigma)$ occurring in Weissauer's theorem 
are in the span of the {\it special} theta lifts 
$\theta _{\mr{special}}(\sigma)$ occurring in the theory of KM and TW. 
This is a problem about the oscillator representation:
the theta kernel $\theta _{\mr{special}}$ is characterized
by representation-theoretic properties. We apply general structure
theorems about the oscillator representation and dual reductive pairs 
due to Howe to conclude our result. 
\end{enumerate}

\subsection{}
\label{SS:intro4}
For the convenience of the reader, sections \ref{S:special} through 
\ref{S:cohrep} collect some background 
utilized in sections \ref{S:siegel} through 
\ref{S:main} where the proofs of the main results are given.
The reader is warned that the notation sometimes changes (for instance
the letter $V$ is a local system in section \ref{S:cohauto}; a real vector space
of dimension $p+q$ in sections \ref{S:special}, 
\ref{S:theta}; a rational vector space of dimension $4$ in section
\ref{S:iso};
a real vector space of dimension $2n$, in section \ref{S:sym}). Eventually
these are specialized to $(p, q) = (3, 2)$, $n=1$.  This is done in part
to be consistent with the notation in the references. Also, the papers of 
Weissauer use the adelic point of view, and the group of symplectic 
similitudes, whereas the papers of Kudla-Millson and Tong-Wang
use the classical viewpoint of automorphic forms as functions on real Lie
groups invariant under a lattice (one exception: \cite{K}, but that paper
deals only with the anisotropic case, i.e., compact quotients). This 
necessitates a discussion of the connection between them in section \ref{S:adelic}. 

\section{Siegel modular threefolds}
\label{S:siegel}
\subsection {}
\label{SS:siegel1}

Let $G = \Sym{4}{\R }$ be the group of real symplectic matrices
of size four. This acts on the Siegel space
\[
\Sg{2} \ =\ \{ \tau \in {\mathbf  M}_2 (\C ) : \phantom{}^t \tau =
\tau ,\
           \operatorname {Im} (\tau ) \text{ is positive definite }  \}
\]
via
\[
g . \tau \ =\ ( a \tau  + b) ( c \tau + d ) ^{-1}, \quad g
\ =\
                        \begin{pmatrix}
                                 a & b \\
                                 c & d
                        \end{pmatrix}.
\]
The group $G/\pm 1$ is the group of holomorphic automorphisms
of $\Sg{2}$, and it acts transitively. The stabilizer
$K$ of the point $i \mbf{1}_2 = \sqrt{-1}\mbf{1}_2$ is
$\{ k =  \begin{pmatrix}
                                 a & b \\
                                 -b & a
                        \end{pmatrix} \}$, which is isomorphic
to the unitary group $U(2)$ via $k \mapsto a + bi$. Thus
$\Sg{2}$ is the symmetric space $G/K$ attached to $K$.
Reference: \cite{HKW}. 
\subsection{}
\label{SS:siegel2}
Let $\Gamma \subset \Sym{4}{\Q}$ be a subgroup commensurable with
$\Sym{4}{\Z}$. Then $\Gamma$ is an arithmetic group. According to a
theorem of Baily-Borel, $X_{\Gamma} = \Gamma \backslash \Sg{2}$
is the analytic space attached to the set of $\C$-points of a
quasi-projective algebraic variety defined over $\C$. The
principal congruence subgroup of level $N$, for an integer
$N \ge 1$, is defined as
\[
\Gamma (N) = \{ \gamma \in \Sym{4}{\Z} \mid
\gamma \equiv \mbf{1}_4 \mr{\ mod\ } N \}.
\]
Every subgroup $\Gamma \subset \Sym{4}{\Z}$ of finite index is a congruence
subgroup in the sense that $\Gamma \supset \Gamma (N)$ for some $N$.
The spaces $X_{\Gamma}$ admit several compactifications: the Borel-Serre
compactification (which is a manifold with corners); the
Satake compactification, which is a projective variety, but usually singular; the toroidal compactifications, which are often smooth and
projective. For a modern discussion of compactifications
of quotients of bounded symmetric domains, see \cite{BJ}.
The general theory of Siegel modular
varieties and their compactifications can be found in
\cite{FC}. 

\subsection{}
\label{SS:siegel3}
The algebraic variety $X$ whose analytic space is
$X^{an} = X(\C ) = \Gamma\backslash\Sg{2}$, is defined
a priori over $\C$, but in fact has a model defined over
a number field (a finite extension of $\Q$).
This
can be seen from two points of view: 
\begin{enumerate}
\item [1.] $X$ is a moduli space for systems $(A, \Phi)$ where
$A$ is an abelian variety of dimension 2, 
and $\Phi$ consists of additional
structures on $A$, typically polarizations, endomorphisms,
rigidifications of points of some order $N$. The theory of moduli
spaces then provides a structure of a scheme (or more generally,
stack) over a number field. 
\item[2.]  $X$ is a Shimura variety. Shimura varieties arise as
quotients of hermitian symmetric spaces by arithmetic
groups with certain additional properties. It is known that
these have canonical models defined over algebraic number fields.
For an introduction to this see \cite{Mi}.
\end{enumerate}

\section{Cohomology and automorphic forms}
\label{S:cohauto}
\subsection{}
\label{SS:cohauto1}

Let $X_{\Gamma} = \Gamma\backslash\underline{G}(\R)/K$,
where $\underline{G}$ is a semisimple 
(more generally: reductive) algebraic group
defined over $\Q$, $K \subset \underline {G}(\R)$ is a
maximal compact subgroup, and
$\Gamma \subset \underline{G}(\Q)$ is a congruence subgroup.
If $\underline{V}$ is a local system of complex vector
spaces coming from a rational finite dimensional representation
$V = V_{\mu}$
of $\underline{G}$, then it is known that the cohomology
$\coh{n}{X_{\Gamma}}{\underline{V}}$ is computable in terms
of automorphic forms. There is a canonical isomorphism
\[
\coh{n}{X_{\Gamma}}{\underline{V}} = \coh{n}{\Gamma}{V_{\mu}} =
\cohb{n}{\mfr{g},\, K}
{ \mr{C}^{\infty}(\Gamma \backslash G)\otimes V_{\mu}}
\]
where the right-hand side is relative Lie algebra cohomology
(see \cite{BW}). The major result, due Jens Franke, 
\cite{F}, built
on earlier works by Borel, Casselman,
Garland, Wallach, is that these spaces are all isomorphic
to $\cohb{n}{\mfr{g},\, K}
{ \mc{A}(\Gamma \backslash G)\otimes V_{\mu}}$, where
$ \mc{A}(\Gamma \backslash G)\subset \mr{C}^{\infty}(\Gamma \backslash G) $
is the subspace of automorphic forms (see \cite{BJa} for the definition
of this space; see \cite{LS} and \cite{Wa} for a description and
survey of this result, and \cite{FS} and \cite{Na} for refinements
and generalizations).

\subsection{}
\label{SS:cohauto2}
As before, $G = \underline{G}(\R)$ for a semisimple algebraic
group $\underline{G}$, and let $\Gamma \subset G$ be a lattice
(a discrete, finite covolume subgroup). Let
$L^2 _{\mr{disc}}(\Gamma\backslash G)$ be the discrete part
of the $G$-module  $L^2 (\Gamma\backslash G)$, so that we have
\[
L^2 _{\mr{disc}}(\Gamma\backslash G) =
\bigoplus_{\pi \in \hat{G}}\mr{m}(\pi,  \Gamma ) H_{\pi}
\]
where the sum is over the irreducible unitary representations,
and the multiplicities $\mr{m}(\pi,  \Gamma )$ are finite. If
$\Gamma$ is cocompact or $G$ has a compact Cartan subgroup, there
is an isomorphism
\[
\mr{IH} ^n (X_{\Gamma },\, \underline{V}) =
\mr{H}^n_{(2)}(X_{\Gamma },\, \underline{V})=
\bigoplus_{\pi \in \hat{G}}\mr{m}(\pi,  \Gamma )
\cohb{n}{\mfr{g}, K}{H_{\pi}\otimes V_{\mu}}
\]
where the left-hand term is intersection cohomology, the middle
term is $L^2$-cohomology; the first equality records the solution
to the Zucker conjecture. The determination of which irreducible
unitary $\pi$ have nonzero $(\mfr{g}, K)$-cohomology is due to
Vogan and Zuckerman, \cite{VZ}. These are the representations
denoted by $A_{\mfr{q}} (\lambda )$ for $\theta$-stable parabolic subalgebras
$\mfr{q} \subset \mfr{g} = \mfr{g}_0 \otimes \C$,
$\mfr{g}_0 = \mr{Lie}(G)$, and certain
linear forms $\lambda$ on a Levi factor $\mfr{l}\subset \mfr{q}$.

\subsection{}
\label{SS:cohauto3}

For  $G = \mr{GSp}_4(\R )$, which has a compact Cartan subgroup, 
the representations with nonzero
$(\mfr{g}, K)$-cohomology have been determined. The list,
without proofs, can be found in \cite{rT}. Because of the
isogeny $\Sym{4}{\R}\sim \mr{SO}_0 (3, 2)$ (see section \ref{S:iso})
these representations can be described in orthogonal language
and have been listed in \cite{HL}, \cite{LS}. The important case
for us are those that contribute to the to the Hodge (1, 1) part.
These are: the trivial representation, and two others
$\pi ^{\pm}$. These last two differ by twist:
$\pi ^{-} = \pi ^{+}\otimes (\mr{sgn}\circ \nu)$ where
$\nu :  \mr{GSp}_{4} \to \mbf{G}_m$ is the canonical
character with kernel $\mr{Sp}_4$. For more details, 
see section \ref{S:cohrep}.

\section{Weissauer's Theorems}
\label{S:weiss}
For Siegel modular threefolds, and
$\underline{V}_{\mu} = \C$, Weissauer
has completely analyzed $\coh{2}{X_{\Gamma}}{\C}$ in terms
of automorphic forms. See his papers \cite{rW2}, \cite{rW4}.
First, the cohomology is all square-integrable, in fact:
\begin{thm}
\label{T:weiss1}
\[
\quad
\mr{H}^2(X_{\Gamma },\, \C)= \mr{H}^2_{(2)}(X_{\Gamma },\, \C)
= \mr{IH}^2 (X_{\Gamma },\, \C).
\]
\end{thm}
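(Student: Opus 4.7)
The statement is the conjunction of two equalities, which I would treat separately.

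The equality $\mr{H}^2_{(2)}(X_{\Gamma},\, \C) = \mr{IH}^2 (X_{\Gamma},\, \C)$ is a special case of Zucker's conjecture, proved independently by Looijenga and by Saper-Stern, identifying the $L^2$-cohomology of a Hermitian locally symmetric variety with the intersection cohomology of its Baily-Borel compactification. Since $X_{\Gamma}$ is a Hermitian locally symmetric space, this half requires no further input. (This instance is already recorded in the general formula of Section \ref{SS:cohauto2}.)

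For the remaining equality $\mr{H}^2(X_{\Gamma},\, \C) = \mr{H}^2_{(2)}(X_{\Gamma},\, \C)$, the plan is to invoke Franke's theorem from Section \ref{S:cohauto} to write
\[
\mr{H}^2(X_{\Gamma},\, \C) \;\cong\; \cohb{2}{\mfr{g}, K}{\mc{A}(\Gamma\backslash G)}
\]
and then decompose $\mc{A}(\Gamma\backslash G)$ by the Langlands-Franke-Schwermer filtration according to associate classes of parabolic $\Q$-subgroups of $\mr{Sp}_4$ and cuspidal support data. The cuspidal and residual summands together constitute $L^2_{\mr{disc}}(\Gamma\backslash G)$, whose contribution to $(\mfr{g}, K)$-cohomology is precisely $\mr{H}^2_{(2)}$. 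What remains is to show that the strictly Eisenstein (non-$L^2$) contributions in degree $2$ vanish.

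For $G = \mr{Sp}_4$ there are three conjugacy classes of proper parabolic $\Q$-subgroups: the Borel, the Siegel parabolic with Levi $\mr{GL}_2$, and the Klingen parabolic with Levi $\mr{GL}_1 \times \mr{SL}_2$. For each I would combine the Vogan-Zuckerman classification of $(\mfr{g}, K)$-cohomological unitary representations with the Borel-Wallach formula for the $(\mfr{g}, K)$-cohomology of parabolically induced modules, together with an analysis of the poles of the standard intertwining operators. The aim is to prove that every possible Eisenstein class in degree $2$ either vanishes outright or arises as a residue of an Eisenstein series, and so belongs to the residual spectrum, hence to $\mr{H}^2_{(2)}$.

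The main obstacle is this last, case-by-case analysis of Eisenstein contributions, especially from the Siegel and Klingen parabolics whose Levi factors admit a rich cohomological spectrum. What makes the argument succeed in this specific setting is the short list of unitary representations of $\mr{Sp}_4 (\R)$ whose $(\mfr{g}, K)$-cohomology with trivial coefficients contributes to Hodge type $(1,1)$: the trivial representation and the two twists $\pi^{\pm}$ of Section \ref{SS:cohauto3}. These are so constrained in their possible parabolic origin that every candidate non-$L^2$ Eisenstein class in $\mr{H}^2$ is forced to come from a pole of the relevant Eisenstein series, which places it back in the residual spectrum. This is precisely the content of Weissauer's analysis in \cite{rW2}, \cite{rW4}.
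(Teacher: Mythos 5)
The paper does not prove this statement at all: it is quoted as a theorem of Weissauer, with \cite{rW2} and \cite{rW4} given as the references, so there is no internal argument to compare yours against. Your outline is the correct standard strategy. The equality $\mr{H}^2_{(2)}(X_\Gamma,\C)=\mr{IH}^2(X_\Gamma,\C)$ is indeed an instance of the Zucker conjecture (Looijenga, Saper--Stern), already invoked in section \ref{SS:cohauto2}, and the equality $\mr{H}^2(X_\Gamma,\C)=\mr{H}^2_{(2)}(X_\Gamma,\C)$ does reduce, via Franke's theorem and the decomposition of $\mc{A}(\Gamma\backslash G)$ by cuspidal support, to showing that no properly Eisenstein (non-square-integrable) class survives in degree $2$. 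But recognize that your proposal does not actually carry out that reduction: the case-by-case analysis over the Borel, Siegel and Klingen parabolics, with the study of intertwining operators and poles of Eisenstein series, is exactly the hard content of Weissauer's papers, and your plan ends by deferring to them. Two smaller cautions: first, Franke's theorem postdates \cite{rW2}, and Weissauer's original argument works more directly with (the Hodge theory of) differential forms and Eisenstein series rather than through the Franke--Schwermer filtration, so your route is a modernized repackaging rather than a reconstruction of his proof; second, the asserted equality requires not only that every degree-$2$ class be square-integrable but also that the natural map $\mr{H}^2_{(2)}\to\mr{H}^2$ be injective, a point your plan does not address explicitly. As a roadmap your proposal is sound and consistent with what the paper cites; as a proof it is incomplete in the same place the paper is silent.
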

This theorem shows in particular that $\mr{H}^2(X_{\Gamma },\, \C)$
has a pure Hodge structure of weight 2.
Consider the Hodge decomposition
\[
\mr{H}^2(X_{\Gamma },\, \C)=
\mr{H}^{2, 0}(X_{\Gamma })\oplus
\mr{H}^{1, 1}(X_{\Gamma })\oplus \mr{H}^{0, 2}(X_{\Gamma }),
\quad
\mr{H}^{0, 2} = \overline{\mr{H}^{2, 0}}.
\]
Via the isomorphism $\mr{H}^2(X_{\Gamma },\, \C) = 
\cohb{2}{\mfr{g},\, K}{ \mc{A}(\Gamma \backslash G)}$
recalled in section \ref{SS:cohauto1}, we can describe the cohomology
in degree 2 as certain kinds of closed differential forms on
$\mfr{H}_2$ with automorphic form coefficients. The automorphic
forms that contribute are square-integrable. More specifically 
one has: 
\begin{thm}
\label{T:weiss2}
The automorphic forms contributing to  $\mr{H}^{2, 0}(X_{\Gamma })$
are given by theta lifting from
dual reductive pairs
$(\mr{GO} (b), \mr{Sp}_4 (\R))$ where $b$ are
two dimensional positive-definite quadratic forms
defined over $\Q$. This allows for an explicit computation
of $\dim \mr{H}^{2, 0}(X_{\Gamma })$. See \cite{rW3}. 
\end{thm}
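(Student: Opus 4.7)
The plan is to first identify, via Vogan--Zuckerman, which irreducible unitary representations $\pi$ of $G = \sym{4}{\R}$ carry a nonzero $(2,0)$-class in $(\mfr g, K)$-cohomology, and then to realize each such $\pi$ explicitly as the archimedean component of a global theta lift. By Theorem~\ref{T:weiss1} together with the discussion in \S\ref{SS:cohauto2}, we have
\[
\mr H^{2,0}(X_\Gamma) \subset \mr H^{2}(X_\Gamma, \C) = \bigoplus_{\pi} \mr m(\pi, \Gamma)\, \cohb{2}{\mfr g, K}{H_\pi},
\]
and the Hodge bigrading on $(\mfr g, K)$-cohomology is induced by the decomposition $\mfr p \otimes \C = \mfr p^{+} \oplus \mfr p^{-}$ attached to the complex structure on $\Sg{2}$. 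A nonzero $(2,0)$-class therefore forces the $\theta$-stable parabolic $\mfr q = \mfr l \oplus \mfr u$ with $\pi = A_{\mfr q}(\lambda)$ to satisfy $\mfr u \cap \mfr p^{-} = 0$ and $\dim(\mfr u \cap \mfr p^{+}) = 2$; consulting the classification in \cite{rT}, \cite{HL}, \cite{LS} (cf.\ \S\ref{SS:cohauto3}), this singles out precisely the holomorphic (limit of) discrete series of $\sym{4}{\R}$ as the representations that can contribute to the Hodge $(2,0)$-piece.

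The central step is then to realize each such holomorphic cuspidal $\pi$ as a theta lift from a dual pair $(\mr{GO}(b), \sym{4}{\R})$ with $b$ a positive-definite binary quadratic form over $\Q$. The archimedean input is Howe's analysis of the oscillator representation for the compact pair $(\mr O(2), \sym{4}{\R})$: it matches every finite-dimensional irreducible of $\mr O(2)$ with a unique member of the holomorphic (limit of) discrete series of $\sym{4}{\R}$ lying on the above Vogan--Zuckerman list, via a Fock-type model. The global input is that a cuspidal automorphic representation of $\sym{4}$ with the required archimedean component admits a non-trivial pre-image on $\mr{GO}(b)(\mbb A)$ for some rational $b$ of signature $(2,0)$, and that any such pre-image lifts back to a nonzero, cuspidal, square-integrable form on $\sym{4}$ realizing the required $(2,0)$-class.

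The principal obstacle is global: one must guarantee that (i)~the theta integral does not vanish on $\mr{GO}(b)(\mbb A)$ for a suitable rational $b$, and (ii)~every holomorphic cuspidal $\pi$ contributing to $\mr H^{2,0}$ arises in this way, with the correct multiplicity. I would dispatch (i) and (ii) via Rallis's tower argument: the first nonzero step of the tower for $\pi$ is forced to be anisotropic because the archimedean component sits in the holomorphic discrete series, which pins the signature of $b$ down to $(2,0)$ and ensures that the lift stays cuspidal. The quantitative statement for $\dim \mr H^{2,0}(X_\Gamma)$ then follows by summing multiplicities over rational equivalence classes of positive-definite binary forms $b$ and applying the Siegel--Weil / Rallis inner product formula to identify the pre-image spaces with classical spaces of modular forms on $\mr O(b)$, which is the computation carried out in \cite{rW3}.
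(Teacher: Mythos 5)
The paper does not prove this theorem; it is quoted from Weissauer \cite{rW3}. Measured against what a correct proof requires, your outline has the right architecture (classify the archimedean component by Vogan--Zuckerman, then realize it globally as a theta lift), but it contains a substantive error at the first step and the decisive global step is asserted rather than proved.

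The archimedean identification is wrong. For $\sym{4}{\R}$ with trivial coefficients, the holomorphic (limits of) discrete series contribute only in bidegree $(3,0)$ (they correspond to scalar weight-$3$ Siegel cusp forms), not $(2,0)$. The unique representation with nonzero $(\mfr{g},K)$-cohomology in bidegree $(2,0)$ is the cohomologically induced module $A_{\mfr{q}}$ attached to the $\theta$-stable parabolic with Levi $L\cong \mr{U}(1,1)$ --- see \S\ref{SS:cohrep5} of this paper, the case $x=[-i|x_2|,ix_2]$, $x_2<0$, nonzero in bidegrees $(2,0)$ and $(3,1)$. This is a non-tempered, singular lowest-weight module (its automorphic realizations give vector-valued forms of type $\mr{Sym}^2\otimes\det$), not a discrete series. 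The distinction is not cosmetic: if the $(2,0)$-classes were carried by holomorphic discrete series the theorem would be \emph{false}, since generic weight-$3$ cusp forms are not theta lifts from binary forms. The theorem holds precisely because the relevant $\pi_\infty$ is so degenerate that any cuspidal $\pi$ containing it is forced to be CAP, and your subsequent steps (Howe duality for the compact pair $(\mr{O}(2),\sym{4}{\R})$, which indeed produces exactly these singular lowest-weight modules with minimal $K$-type $(\ell+1,1)$, and the claim that ``the holomorphic discrete series pins the signature of $b$ down to $(2,0)$'') all lean on the incorrect identification.

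Second, the surjectivity statement --- that \emph{every} automorphic $\pi$ with this archimedean component is a lift from some $\mr{GO}(b)$, $b$ positive definite binary --- is the actual content of Weissauer's theorem, and invoking ``Rallis's tower argument'' does not discharge it. The tower argument, run from $\mr{Sp}_4$ up the Witt tower of orthogonal groups, gives a first occurrence which is nonzero and cuspidal, but one must still (i) compute the local theta lifts of $A_{\mfr{q}}$ and of the finite components to show the first occurrence is at a two-dimensional anisotropic space of the right signature, (ii) prove global nonvanishing there, and (iii) supply the ``going back'' step identifying $\pi$ with the lift of its own lift (inner product formula in a range where Siegel--Weil must be justified). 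Weissauer's own argument instead works directly with the Fourier expansions of the corresponding $\mr{Sym}^2\otimes\det$-valued modular forms and the theory of singular forms to exhibit them as linear combinations of binary theta series. As written, your proposal restates the theorem at the point where the proof has to begin.
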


This has been generalized in part by
Jian-Shu Li to $\mr{H}^{g, 0}(X_{\Gamma })$
for quotients of $\Sg{g}$, see \cite{Li1}.

\begin{thm}
\label{T:weiss3}
\begin{itemize}
\item [1.] Siegel modular threefolds have maximal Picard number:
\[
\mr{H}^{1, 1}(X_{\Gamma })  = \mr{Pic} (X_{\Gamma })\otimes \C .
\]

\item[2.] There is a canonical decomposition
$\mr{Pic} (X_{\Gamma })\otimes \C = \C \cdot [\mc{L}]\oplus
\mr{Pic} (X_{\Gamma })_0$ where $[\mc{L}]$ is the
Lefschetz class. Then:
\begin{itemize}
 \item[2.1]  $[\mc{L}]$  corresponds to the trivial
automorphic representation of $\mr{Sp}_4$.
\item[2.2] The automorphic forms in  $\mr{Pic} (X_{\Gamma })_0$
are given by theta lifting from
the dual reductive pair
$(\tilde{\mr{SL}}(2, \, \R ), \mr{SO}_0 (3, 2)\sim \mr{Sp}_4 (\R))$.
More precisely, they are all given by lifting of weight $5/2$
holomorphic cusp forms on $\tilde{\mr{SL}}(2, \, \R )$
to the unique automorphic representation
of $\mr{SO}_0 (3, 2)$
that contributes to  $\mr{Pic} (X_{\Gamma })_0$ by
Vogan-Zuckerman theory.
\end{itemize}
\end{itemize}
\end{thm}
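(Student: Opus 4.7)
The plan is to combine the $(\mfr{g},K)$-cohomology description of $\coh{2}{X_\Gamma}{\C}$ provided by Theorem \ref{T:weiss1} with the Vogan–Zuckerman classification of cohomological representations of $\Sym{4}{\R}$, sort the summands by Hodge type, and then realize the $(1,1)$-contributions through the local and global theta correspondence for the dual pair $(\tilde{\mr{SL}_2}(\R),\,\mr{SO}_0(3,2))$. Concretely, Theorem \ref{T:weiss1} and Matsushima's formula give
\[
\coh{2}{X_\Gamma}{\C}\;=\;\bigoplus_{\pi\in\hat G}\mr{m}(\pi,\Gamma)\,\cohb{2}{\mfr{g},K}{H_\pi},
\]
and the Vogan–Zuckerman list, spelled out for $\Sym{4}{\R}$ in section \ref{SS:cohauto3}, consists of the trivial representation, the two (anti)holomorphic limits of discrete series, and the representations $\pi^{\pm}$. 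The Hodge bigrading on each summand comes from the splitting $\mfr{p}_{\C}=\mfr{p}^{+}\oplus\mfr{p}^{-}$; inspecting lowest $K$-types shows that the trivial representation and $\pi^{\pm}$ produce classes of type $(1,1)$, while the two limits of discrete series give classes of type $(2,0)$ and $(0,2)$. The trivial summand is spanned by the Kähler form of a toroidal polarization, yielding $[\mc{L}]$ and establishing part $(2.1)$.

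Part $(2.2)$ then amounts to identifying $\pi^{\pm}$ as theta lifts. Locally, one invokes Howe duality for the dual pair $(\tilde{\mr{SL}_2}(\R),\,\mr{O}(3,2))$ in the Weil representation: matching the lowest $K$-type that produces the $(1,1)$-class on the orthogonal side, together with the joint-harmonics recipe, identifies $\pi^{\pm}$ with the theta lift of the holomorphic limit of discrete series of weight $5/2$ on the metaplectic cover; the sign choice reflects the $\mr{O}(3,2)$-action interchanging connected components, equivalently the quadratic twist relating $\pi^{-}$ to $\pi^{+}\otimes(\mr{sgn}\circ\nu)$. Globally, one analyzes the Rallis tower $\mr{O}(2,1)\subset\mr{O}(3,2)\subset\cdots$ adelically: vanishing of the lift at the lower step forces cuspidality of the lift to $\mr{O}(3,2)$, nonvanishing is governed by the central value of $L(s,\sigma)$ via Waldspurger's metaplectic formula, and the upshot is that every occurrence of $\pi^{\pm}$ in $L^2_{\mr{disc}}(\Gamma\backslash G)$ is realized as $\theta(\sigma)$ for a weight-$5/2$ holomorphic cusp form $\sigma$.

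For the maximal Picard number asserted in part $(1)$, pass to a smooth projective toroidal compactification $\overline{X}_\Gamma$. By Theorem \ref{T:weiss1}, $\coh{2}{X_\Gamma}{\C}=\mr{IH}^2(X_\Gamma,\C)$ carries a pure Hodge structure of weight $2$, and the Hecke algebra splits each $\pi^{\pm}$-isotypic piece into Galois-stable packets defined over number fields whose conjugates remain of type $(1,1)$. The Lefschetz $(1,1)$ theorem on $\overline{X}_\Gamma$ then exhibits every such eigenclass as an algebraic divisor class, and together with $[\mc{L}]$ these span $H^{1,1}(X_\Gamma)=\mr{Pic}(X_\Gamma)\otimes\C$. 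The step I expect to be the main obstacle is the global identification and surjectivity of the theta lift onto the $\pi^{\pm}$-isotypic part of the discrete spectrum: controlling nonvanishing at the archimedean place where weight $5/2$ sits at a limit of discrete series, keeping careful track of genuineness on the metaplectic cover, and reconciling the classical and adelic formalisms — precisely the bookkeeping that the paper defers to section \ref{S:adelic}.
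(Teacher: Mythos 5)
This statement is quoted from Weissauer's papers \cite{rW2}, \cite{rW4}; the paper under review offers no proof of it at all, but simply cites it as input (and reformulates it adelically in section \ref{S:adelic}). So there is no internal argument to compare yours against; what can be said is how your sketch relates to Weissauer's published proof and where it is incomplete.

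Your overall architecture --- Theorem \ref{T:weiss1} plus Matsushima's formula, the Vogan--Zuckerman list for $\Sym{4}{\R}$ sorted by Hodge type, the trivial representation accounting for $[\mc{L}]$, and rationality of the Hecke-stable $(1,1)$-isotypic pieces feeding into Lefschetz $(1,1)$ for part 1 --- is indeed the correct skeleton and matches Weissauer. (Two small inaccuracies: the representations contributing to $\mr{H}^{2,0}$ are the $A_{\mfr{q}}$ with Levi $\mr{U}(1,1)$, not limits of discrete series; and for part 1 you still need to relate $\mr{H}^2$ of the open variety to that of $\overline{X}_\Gamma$ and to know $\mr{H}^1(X_\Gamma,\mc{O})=0$ so that $\mr{Pic}\otimes\C$ really equals the N\'eron--Severi part.)

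The genuine gap is in 2.2, and it is exactly the step you flag as "the main obstacle": exhaustion. Identifying the archimedean component $\pi^{\pm}$ as a local theta lift of the weight-$5/2$ lowest-weight module via joint harmonics is fine (this is essentially section \ref{SS:cohrep6} of the paper, via Jian-Shu Li and Przebinda). But the global statement is that \emph{every} automorphic representation of $\mr{GSp}_4(\mbb{A})$ occurring in the discrete spectrum with archimedean component $\pi^{\pm}$ lies in the image of the theta lift from $\tilde{\mr{SL}}_2$. The Rallis tower and Waldspurger's nonvanishing criterion run in the wrong direction for this: they tell you which $\sigma$ produce nonzero cuspidal lifts, not that the lifts exhaust the $\pi^{\pm}$-isotypic part of $L^2_{\mr{disc}}$. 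Weissauer's actual argument is a lengthy analysis showing that any such representation is CAP with respect to the Siegel parabolic (Saito--Kurokawa type) and then invoking the Piatetski-Shapiro/Howe characterization of such representations as theta lifts from the metaplectic group. Without that (or an equivalent multiplicity/Arthur-packet argument), your sketch establishes containment of the theta-lift span in $\mr{Pic}(X_\Gamma)_0\otimes\C$ but not equality, which is the content of the theorem.
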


\section{Adelic formulation}
\label{S:adelic}
Let $\mbb{A} = \R \times \mbb{A}_f$ be the ring of adeles of
the rational field $\Q$; $\mbb{A}_f = \Q \otimes \prod _p \Z _p $
is the ring of finite adeles.
\subsection{}
\label{SS:adelic1}
Let $G = \mr{GSp}_4$ be the algebraic group over $\Q$ of
symplectic similitudes, i.e., of $4\times 4$ matrices
$g$ such that
\[
\transp{g} \Psi g = \nu (g) \Psi, \quad
\Psi = \begin{pmatrix}
     0 & 1 _2\\
      -1 _2 & 0
    \end{pmatrix}.
\]
There is an exact sequence
\[
\begin{CD}
0 @>>> \mr{Sp}_4@>>> \mr{GSp}_4@>\nu >> \mbb{G}_m @>>> 0.
\end{CD}
\]
Let
\[
h : \mbb{S} := \mr {Res}_{\R}^{\C}( \mbb{G}_m) \to \mr{GSp}_4
\]
be the morphism defined over $\R$ with the property that
$x + i y \in \C ^{\times} =  \mbb{S}(\R)$ maps to
\[
\begin{pmatrix}
     x1 _2 & y1 _2\\
      -y 1_2 &  x1 _2
    \end{pmatrix}.
\]
Let $K_{\infty} \subset \mr{GSp}_4 (\R )$ be the stabilizer of $h$.
Then $K_{\infty} = Z_{\R}\cdot K'_{\infty} $, where
$Z_{\R}\subset \mr{GSp}_4 (\R )$ is the center and
$K'_{\infty}\subset \mr{Sp}_4 (\R )$ is a maximal compact
subgroup. For any open subgroup of finite index
$L \subset  \mr{GSp}_4 (\mbb{A}_f)$ we define
\[
 M_L (\C) =  M_L (\mr{GSp}_4 (\Q), h)_{\mr{an}}=
\mr{GSp}_4 (\Q)\backslash \mr{GSp}_4 (\mbb{A})/K_{\infty}L.
\]
This is the set of complex points of a quasiprojective algebraic
variety $ M_L$ defined over a number field. This is a
disjoint union of spaces of the type $X_{\Gamma}$ discussed above,
for various arithmetic subgroups $\Gamma \subset  \mr{Sp}_4 (\Q)$.
For instance, if we take, for an integer $N\ge 1$,
\[
L_N = \left \{ k \in \prod _p G(\Z _p) \mid
k \cong \ 1_4\  \mr{mod} \ N
       \right \}
\]
then $M_{L_N}(\C) := M_N (\C)$ is a disjoint union
of $\phi (N)$ copies of $\Gamma (N)\backslash \Sg{2}$. The variety
$M_N$ is defined over $\Q$, and each connected component is
defined over $\Q (\zeta _N )$, $\zeta _N = \mr{exp}(2 \pi i /N)$.

\subsection{}
\label{SS:adelic2}
Recall that $G = \mr{GSp}_4$. We define
\[
\mr{H}^i(\mr{Sh}(G), \C)  :=  \underset{L}{\varinjlim}\,
\mr{H}^i( M_L(\C), \C)
\]
which is in a canonical way an admissible
$\pi _0 (G(\R))\times G (\mbb{A}_f)$-module. For any compact open
subgroup $L$ we have
\[
\mr{H}^i(\mr{Sh}(G), \C) ^{L} = \mr{H}^i( M_L(\C), \C).
\]
This is a module for the Hecke algebra
$\mathcal{H}_{L} = C_c (G(\mbb{A}_f)//L)$ of $\C$-valued
compactly supported $L$-biinvariant functions on
$G(\mbb{A}_f)$, which is an algebra for the convolution
product, once a Haar measure is fixed on $G(\mbb{A}_f)$.
The major result is that there is a canonical isomorphism
\[
\mr{H}^i(\mr{Sh}(G), \C) = \mr{H}^i (\mfr{g}, K_{\infty}; \mc{A}(G)),
\]
where $\mfr{g} = \mr{Lie} (G(\R)) = \mfr{gsp}_4 (\R)$,
$K_{\infty}$ is defined in section \ref{SS:adelic1},
$\mc{A}(G)$ is the space of automorphic forms on
$G(\mbb{A})$, and the right-hand side is relative
Lie algebra cohomology. This is an isomorphism
of $G(\mbb{A}_{f})$-modules, for the canonical
structures on both sides. In this case, the above
isomorphism can be refined to an isomorphism of
Hodge $(p, q)$-components.

\subsection{}
\label{SS:adelic3}
Weissauer's theorems are the following:

\subsubsection{}
\label{SSS:adelic3.1} $\mr{H}^2(\mr{Sh}(G), \C)  = \mr{H}^2 _{(2)}(G, \C)$.
Therefore we have, for each Hodge index $(p, q)$ with $p+q=2$,
\[
\mr{H}^{p, q}( M_L(\C)) =
\bigoplus _{\pi _{\infty} \in \mr{Coh}^{p, q}}
m(\pi )\mr{H}^{p, q} (\mfr{g}, K_{\infty}; \pi _{\infty})\otimes \pi _f^{L}
\]
where the sum ranges over all the irreducible automorphic representations
$\pi = \pi_{\infty}\otimes \pi _f$ which occur in the discrete spectrum
\[
L^2 _{d}(G(\Q)Z(\R)^{\circ}\backslash G(\mbb{A}), dg)
\]
where $Z(\R)^{\circ}\subset G(\R)$ is the connected component of the center.
The set $\mr{Coh}^{p, q}$ is the finite
set of unitary representations of $G(\R)$ with trivial central character and
with nonzero $(\mfr{g}, K_{\infty})$-cohomology in 
dimension $(p, q)$.

\subsubsection{}
\label{SSS:adelic3.2}
There is only one element of $\mr{Coh}^{2, 0}$ (resp. $\mr{Coh}^{0, 2}$),
call it $\pi $. Then
$\mr{H}^{p, q} (\mfr{g}, K_{\infty}; \pi )$ is one-dimensional
for $(p, q) = (2, 0)$ (resp. $(0, 2))$. Every automorphic representation
contributing to $\mr{H}^{2, 0}(M_L(\C), \C)$ is in
the image of the theta lifting from the orthogonal similitude group
$\mr{GO}(b)$ as $b$ ranges over the positive-definite binary quadratic
forms over $\Q$.

\subsubsection{}
\label{SSS:adelic3.3}
$\mr{Coh}^{1, 1} = \{1, \pi^{\pm} \}$, where $1$ is the trivial one-dimensional
representation, and $\pi ^{-} = \pi ^{+}\otimes \mr{sgn}$, where
$\mr{sgn }: \mr{GSp}_4 (\R ) \to \{\pm 1\}$ is the sign character. One has
\[
\mr{Pic}( M_L (\C))\otimes \C =
\mr{H}^{1, 1}(M_L(\C)))
\]
and we can canonically decompose this as
\[
\mr{Pic}( M_L(\C))\otimes \C =
\C .[\mc{L}]\oplus \mr{Pic}( M_L(\C))_0 \otimes \C
\]
where $\mc{L}$ is the canonical polarization (``Lefschetz class'').
This term corresponds to the automorphic representation $1$.
Weissauer showed that the classes in 
$\mr{Pic}(M_L(\C))_0 \otimes \C  = \mr{H}^{1, 1} (M_L(\C))_0$ in the complement of the Lefschetz
class are generated by the images of 
$\mr{H}^{1, 1}(\mfr{g}, K; 
\theta (\sigma, \psi)\otimes (\chi\circ \nu))$. Here, $\psi : \mbb{A}/\Q \to \C$ is a nontrivial
additive character, $\sigma$ is an irreducible (anti)holomorphic
cuspidal automorphic representation of $\tilde{\mr{SL}}_2(\mbb{A})$ 
of weight $5/2$, $\theta (\sigma, \psi)$ is the theta lifting
with respect to the Weil representation $\omega _{\psi}$
to an automorphic representation to $\mr{PGSp}_4 (\mbb{A})$
viewed as a representation of $\mr{GSp}_4 (\mbb{A})$, 
$\chi : \mbb{A}^{*}/\Q ^{*}\R _{> 0}^{*} \to \C^{*}$ is an
idele class (Dirichlet) character, and 
$\nu: \mr{GSp}_4 \to \mbf{G} _m$ is the canonical character
with kernel $\mr{Sp}_4$.

\subsection{}
\label{SS:adelic4}
We get the Picard group by varying 
all the data in the above. 
First note that we can fix one choice of nontrivial additive character $\psi$. 
The reason is that, every other nontrivial additive character
is of the form $\psi _t$ for a $t \in \Q ^*$, where
$\psi _t (x) = \psi (tx)$. It is known that 
$\theta (\sigma, \psi _t) =  \theta (\sigma _t, \psi)$ (~\cite{rH1}, 1.8), where
$\sigma _t $ is the automorphic representation 
\[
g \mapsto \sigma \left ( 
\begin{pmatrix}t &0\\0 &1 \end{pmatrix}
g  \begin{pmatrix}t^{-1} &0\\0 &1 \end{pmatrix}
\right ). 
\]
So from now on, we drop explicit reference to $\psi$.

\subsection{}
\label{SS:adelic5}
For each integer $N\ge 1$ let $M_N  = M _{K_{N}}$, 
which is a scheme over $\Q$. $M_N \otimes \overline{\Q}$ 
has $\phi (N)$ (Euler phi) connected components, defined and
all isomorphic over $\Q (\zeta _{N})$, where $\zeta _N$ is a 
primitive $N^{\mr{th}}$ root of unity. These are permuted simply
transitively by $\mr{Gal}(\Q (\zeta _{N})/\Q)$. We denote 
any one of these components by $M_N ^0$. We have 
$M_N ^0 (\C) = \Gamma (N)\backslash \mfr{H}_2$. 
The group $\mr{Gal}(\overline{\Q} /\Q)$ acts on  
$\mr{Pic}(M _N)\otimes \Q$, fixing the Lefschetz class.
Weissauer proved \cite[Thm. 2, p. 184]{rW4} that the 
action of $\mr{Gal}(\overline{\Q} /\Q)$ on $\mr{Pic}(M _N)\otimes \Q$
factors over the abelian quotient $\mr{Gal}(\Q ^{\mr{ab}} /\Q)$. 
Therefore we have a decomposition
\[
\mr{Pic}(M _N)_0\otimes \C = \bigoplus _{\chi}
\mr{Pic}^{\chi}(M _N)_0
\]
of isotypical spaces for the characters $\chi$ of  
$\mr{Gal}(\Q ^{\mr{ab}} /\Q)$. By classfield theory these
can be identified with idele class characters
$\chi : \mbb{A}^{*}/\Q ^{*}\R _{> 0}^{*} \to \C^{*}$. 
The space $\mr{Pic}( M_N(\C))_0$ is the kernel of the canonical 
trace map
\[
\mr{Pic}( M_N(\C))\otimes \Q \to \mr{Pic}( M_1(\C))\otimes \Q.
\]
We can similarly define $\mr{Pic}( M_N ^0(\C))_0$.
Evidently, for the inclusion of any connected 
component $M_N ^0 \to M_N$, the restriction
$\mr{Pic} (M _N)_0 \otimes \Q\to 
\mr{Pic}(M _N ^0)_0\otimes \Q$ is surjective. By 
choosing these inclusions compatibly we can define a map
\[
\mr{Pic} (M) := \underset{N}\varinjlim\
\mr{Pic} (M _N)_0 \otimes \Q \to 
\underset{N}\varinjlim \ \mr{Pic} (M _N ^0)_0 := \mr{Pic} (M^0). 
\]

\begin{lem}
\label{L:adelic1} For the identity character $1$, the map
$\mr{Pic}^{1} (M)_0 \to 
\mr{Pic} (M ^0)_0\otimes \C$ is surjective.
\end{lem}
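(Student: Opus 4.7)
The plan is to use the explicit description from paragraph \ref{SSS:adelic3.3}, which says that the $\chi$-isotypic component $\mr{Pic}^\chi(M_N)_0$ is spanned by classes of twisted theta lifts $[\theta(\sigma)\otimes(\chi\circ\nu)]$, together with the elementary observation that the twist factor $\chi\circ\nu$ is constant on each geometric connected component of $M_N(\C)$.

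Since the abelian Galois action on $\mr{Pic}(M_N)_0\otimes\C$ factors through a finite quotient at every level, formation of the $\chi=1$ isotypic subspace commutes with $\varinjlim_N$, and similarly for the target. Thus it suffices to prove, for each $N\ge 1$, surjectivity of
\[
r_1:\ \mr{Pic}^1(M_N)_0 \longrightarrow \mr{Pic}(M_N^0)_0\otimes\C.
\]

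The heart of the argument is then short. The character $\chi\circ\nu:G(\mbb{A})\to\C^*$ is $G(\Q)$-invariant, trivial on $K_\infty$, and (for $\chi$ of conductor compatible with $L_N$) trivial on $L_N$, so it descends to the finite set of connected components
\[
\pi_0(M_N(\C))\ \cong\ \Q_{>0}^*\backslash\mbb{A}^*/\R_{>0}^*\nu(L_N).
\]
On the single component $M_N^0$ it is therefore a nonzero constant $c_\chi\in\C^*$, whence
\[
r\bigl([\theta(\sigma)\otimes(\chi\circ\nu)]\bigr)\big|_{M_N^0} = c_\chi\cdot r\bigl([\theta(\sigma)]\bigr)\big|_{M_N^0}.
\]
As $\sigma$ ranges over the relevant weight $5/2$ cuspidal representations of $\tilde{\mr{SL}_2}(\mbb{A})$, these classes span $\mr{Pic}^\chi(M_N)_0$ by \ref{SSS:adelic3.3}, so $r(\mr{Pic}^\chi(M_N)_0)=r(\mr{Pic}^1(M_N)_0)$ as subspaces of $\mr{Pic}(M_N^0)_0\otimes\C$, for every $\chi$. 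Combined with the surjectivity of the total restriction $\mr{Pic}(M_N)_0\otimes\C = \bigoplus_\chi \mr{Pic}^\chi(M_N)_0 \twoheadrightarrow \mr{Pic}(M_N^0)_0\otimes\C$ stated just before the lemma, this forces $r_1$ itself to be surjective.

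The main obstacle is the level-compatibility bookkeeping: one must check that the $\chi$'s contributing nontrivially at level $N$ are exactly those trivial on $\nu(L_N)$, so that $\chi\circ\nu$ really is componentwise constant at this level. This is a standard class-field-theoretic verification built on the adelic description of $\pi_0(M_N(\C))$ via the similitude norm; once it is in place, the rest of the argument is formal, and the finite-level surjectivity passes to the direct limit to yield the statement for $M$ and $M^0$.
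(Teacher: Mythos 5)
Your argument is correct in substance, but it is a genuinely different proof from the one in the paper. The paper's proof is purely geometric and makes no use of the automorphic description: given $\mc{M}$ on the component $M_N^0$, it passes to a level $N'$ over which the extension-by-zero $i_{N',\ast}f^*\mc{M}$ is fixed by $\mr{Gal}(\overline{\Q}/\Q(\zeta_{N'}))$, and then averages over $\mr{Gal}(\Q(\zeta_{N'})/\Q)$; since that group permutes the components simply transitively, the averaged class restricts back to $f^*\mc{M}$ on the chosen component and is by construction Galois-invariant, i.e.\ lies in $\mr{Pic}^1(M_{N'})_0$. You instead invoke Weissauer's description of each $\chi$-isotypic piece as spanned by classes from $\theta(\sigma)\otimes(\chi\circ\nu)$ and observe that $\chi\circ\nu$ is a nonzero constant on each connected component, so restriction to $M_N^0$ sends the $\chi$-piece into the image of the $1$-piece (note you only need the containment $r(\mr{Pic}^\chi(M_N)_0)\subseteq r(\mr{Pic}^1(M_N)_0)$, not the equality you assert). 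What each buys: the paper's averaging argument is formal and independent of Theorem \ref{SSS:adelic3.3} --- it works whenever the abelianized Galois group permutes components through the cyclotomic character --- but it genuinely requires passing to the limit over $N'$; your argument leans on the full strength of Weissauer's classification and on the conductor bookkeeping you flag (that the $\chi$ contributing at level $N$ are trivial on $\nu(L_N)$, so that the twist really is componentwise constant and $L_N$-invariance is preserved under untwisting), but in exchange it yields surjectivity already at each finite level $N$, and it essentially proves the subsequent lemma (that everything comes from untwisted Saito--Kurokawa lifts) in the same breath. One small slip: your double-coset description of $\pi_0(M_N(\C))$ should have $\Q^*$ rather than $\Q^*_{>0}$ on the left (equivalently it is $\hat{\Z}^*/\nu(L_N)\cong(\Z/N)^*$, of order $\phi(N)$ as in the paper); this does not affect the argument, since all you use is that $\chi\circ\nu$ descends to the component set.
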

\begin{proof}
Let $G_n$ be the kernel of the map  
$\mr{Gal}(\Q ^{\mr{ab}} /\Q) \to \mr{Gal}(\Q (\zeta _{n})/\Q)$. 
Let $\mc{M}\in\mr{Pic} (M _N ^0)_0 $. Then there is an 
$N'\ge N$ with the property that 
$\mr{Gal}(\overline{\Q}  /\Q (\zeta _{N'}))$ acts
trivially on $i_{N, \ast}\mc{M}$, where
$i_N : M_N ^0 \to M_N$ is the inclusion
(extension by $0$ on all the other components).   
Let $f : M_{N'}^0 \to M_N ^0$ be the canonical projection, 
and extend $i_N$ to a map $i_{N'}:  M_{N'} ^0 \to M_{N'}$
which commutes with the projection $f : M_{N'} \to M_N $. 
The line bundle $ \mc{M}' = i_{N', \ast} f^* \mc{M}$ 
is fixed by $G_{N'}$, and hence for any 
$g \in \mr{Gal}(\Q (\zeta _{N'})/\Q)$, $g^* \mc{M}'$ is 
well-defined. Then we define 
$\mc{N}\in \mr{Pic} ^{1}(M_{N'})_0$ by 
\[
\mc{N} = \sum _{g \in \mr{Gal}(\Q (\zeta _{N'})/\Q)} g^*\mc{M}'.
\]
Moreover, since $\mr{Gal}(\Q (\zeta _{N'})/\Q)$
acts simply transitively on the components of 
$M_{N'}$, it follows that $i^{\ast}_{N'}\mc{N} = f^* \mc{M}$.
This shows that after extension to $N'$ the class
$\mc{M}\in  \mr{Pic} (M ^0 _N)_0 \subset \mr{Pic} (M^0)_0$ 
is in the image of $\mr{Pic}^{1} (M_{N'})_0 \subset \mr{Pic}^{1} (M)_0$.
\end{proof}

\begin{lem}
Any element of $\mr{Pic} (M^0)_0\otimes \C$ is in the image 
(in the sense of section \ref{SS:adelic4}) of the Saito-
Kurokawa lifts $\theta (\sigma)$ of holomorphic 
cusp forms $\sigma$ of weight $5/2$.
\end{lem}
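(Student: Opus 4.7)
The plan is to bootstrap from Weissauer's description in section~\ref{SSS:adelic3.3} together with Lemma~\ref{L:adelic1}. By section~\ref{SSS:adelic3.3}, every class in $\mr{Pic}(M_N)_0\otimes\C$ is a finite sum of images of $(\mfr{g},K)$-cohomology classes attached to representations of the form $\theta(\sigma)\otimes(\chi\circ\nu)$, where $\sigma$ ranges over (anti)holomorphic cuspidal representations of $\tilde{\mr{SL}}_2(\mbb{A})$ of weight $5/2$ and $\chi\colon\mbb{A}^{\ast}/\Q^{\ast}\R_{>0}^{\ast}\to\C^{\ast}$ ranges over idele class characters. My first step is to observe that, under the decomposition $\mr{Pic}(M_N)_0\otimes\C=\bigoplus_{\chi}\mr{Pic}^{\chi}(M_N)_0$ into $\mr{Gal}(\Q^{\mr{ab}}/\Q)$-isotypical components, a summand arising from $\theta(\sigma)\otimes(\chi\circ\nu)$ lands in the $\chi$-isotypical piece $\mr{Pic}^{\chi}(M_N)_0$. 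In particular, taking $\chi=1$ shows that $\mr{Pic}^{1}(M_N)_0$ is generated by the untwisted lifts $\theta(\sigma)$; passing to the direct limit over $N$, the same holds for $\mr{Pic}^{1}(M)_0$.

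Next, I would invoke Lemma~\ref{L:adelic1}: the restriction map $\mr{Pic}^{1}(M)_0\to\mr{Pic}(M^0)_0\otimes\C$ is surjective. Combining this with the previous step, every class in $\mr{Pic}(M^0)_0\otimes\C$ is the image of a finite sum of untwisted Saito-Kurokawa lifts $\theta(\sigma)$, which is the desired conclusion.

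The apparent restriction to a single additive character $\psi$ is harmless, thanks to the identity $\theta(\sigma,\psi_t)=\theta(\sigma_t,\psi)$ recalled in section~\ref{SS:adelic4}: as $\sigma$ varies over all weight $5/2$ cuspidal representations of $\tilde{\mr{SL}}_2(\mbb{A})$, the reparametrized forms $\sigma_t$ also occur, so fixing one $\psi$ loses no theta lifts.

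The delicate step is the identification of $\mr{Pic}^{\chi}(M_N)_0$ with the span of the $(\chi\circ\nu)$-twisted theta lifts. This requires verifying that the Galois action on $\mr{Pic}(M_N)\otimes\Q$, which by \cite[Thm.~2]{rW4} factors through $\mr{Gal}(\Q^{\mr{ab}}/\Q)$, is transported under the Eichler--Shimura-type isomorphism to the automorphic side as the central character of the similitude twist $\chi\circ\nu$. Granting this compatibility, which is built into the formulation recorded in section~\ref{SSS:adelic3.3}, the remainder of the argument reduces to matching indices across the decomposition.
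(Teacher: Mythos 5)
Your proposal is correct and follows essentially the same route as the paper: quote Weissauer's refinement that $\mr{Pic}^{\chi}(M)_0$ is spanned by the $(\chi\circ\nu)$-twisted lifts, specialize to $\chi=1$, and then use Lemma \ref{L:adelic1} to surject onto $\mr{Pic}(M^0)_0\otimes\C$. The extra remarks you add (the $\psi_t$ versus $\sigma_t$ reparametrization, and the compatibility of the Galois isotypical decomposition with the central twist) are exactly the points the paper disposes of in section \ref{SS:adelic4} and by citation to Weissauer, respectively.
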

\begin{proof}
Weissauer showed more precisely that the elements
of $\mr{Pic} ^{\chi}(M )_0$ are in the image of
$\theta (\sigma, \psi)\otimes (\chi\circ \lambda)$.
But lemma \ref{L:adelic1} shows that every element
of $\mr{Pic} (M^0)_0\otimes \C$ is in the image of 
$\mr{Pic} ^{1}(M )_0$ and these are in the image of the 
Saito-Kurokawa lift. 
\end{proof}

\section{Structure of the oscillator representation}
\label{S:fock}
This section follows \cite{rH2}, \cite{KM4}. We let $V$ be the $\R$-vector
space with a quadratic form $(\ , \ )$ of signature $(p, q) = (3, 2)$;
$n= 5 = p+q$.
Let $W$ be the $\R$-vector space of dimension $m = 2$ with an alternating
nondegenerate bilinear form $\langle \ ,\ \rangle$. We describe a model for
the infinitesimal oscillator representation $(\omega, \mfr{sp} (V\otimes W)$.
The maximal compact subgroup of
$\mr {Sp}_{10} = \mr{Sp} (V\otimes W)\sim \mr{Sp} (10, \R)$ is isomorphic to
the unitary group $U_5$. We let $\tilde{\mr {Sp}}_{10} = \tilde{\mr{Sp}} (10, \R)$ be
the metaplectic cover.
and in general putting a tilde over an object in  $\mr{Sp} (10, \R)$
denotes its inverse image in the metaplectic cover.

\subsection{}
\label{SS:fock1}
The space of $\tilde {U}_{5}$-finite vectors in the Fock realization of $\omega$
is isomorphic to the space of polynomials $\mathcal{P}(\C ^5)$ in five
variables $z_i$, $i=1,...,5$. The action is given by
\begin{align*}
\omega ( \mfr{sp}_{10}\otimes \C) &=
\mfr{sp} ^{(1, 1)}\oplus \mfr{sp}^{(2, 0)}\oplus\mfr{sp}^{(0, 2)}\\
\mfr{sp} ^{(1, 1)} &= \mr{span\ of\ } \left \{
                \left (
    z_i \frac{\partial }{\partial z_j} + \frac{1}{2} \delta_i^j
                            \right ) \right \}\\
\mfr{sp} ^{(2, 0)} &= \mr{span\ of\ } \left \{
    z_i  z_j \right \}\\
\mfr{sp} ^{(0, 2)} &= \mr{span\ of\ } \left \{
   \frac{\partial ^2}{ \partial z_i \partial z_j }\right \}.
\end{align*}
In the Cartan decomposition $ \mfr{sp}_{10} = \mfr{u}_5 \oplus \mfr{q}$
we have
\[
\omega (\mfr{u}_5 \otimes \C) = \mfr{sp} ^{(1, 1)}, \quad
\omega (\mfr{q} \otimes \C) = \mfr{sp} ^{(2, 0)}\oplus \mfr{sp} ^{(0, 2)}.
\]

\subsection{}
\label{SS:fock2}
We are interested in the reductive dual pair
\[
(G, G')= (\tilde{O}(V) = \tilde {O}_{3, 2},\
\tilde{Sp}(W) = \tilde {SL}_{2}(\R ) )
\]
inside $\tilde{Sp} (V\otimes W) = \tilde{Sp}_{10}$, and especially the
the structure of $\mc{P}(\C ^5)$ as a
$(\mfr{g}, \tilde{K})\times (\mfr{g}', \tilde{K}')$-module,
where $\mfr{g} = \mr{Lie}(G) = \mfr{o}(V) = \mfr{o}_{3, 2}$,
$\mfr{g}' = \mr{Lie}(G') = \mfr{sp}(W) = \mfr{sl}_{2}(\R)$,
$K = \mr{O}(3) \times \mr{O}(2)$ is the maximal compact
subgroup of $G$, $K'= \mr{SO}(2)$ is the maximal compact
subgroup of $G'$. Following the convention in \cite[p. 154]{KM4}
we number the variables $z_i$ as $z_{\alpha}$, $\alpha = 1, 2, 3$,
and $z_{\mu}$, $\mu = 4, 5$; generally indices $\alpha, \beta, ...$
run from 1 to 3 and indices $\mu, \nu, ...$ run from 4 to 5. In this
numbering the group $ \mr{O}(3) \times \mr{O}(2)$ acts
so that $\mr{O}(3)$ rotates the variables $z_{\alpha}$ and
 $\mr{O}(2)$ rotates the variables $z_{\mu}$.

\subsection{}
\label{SS:fock3}
Let
\[
\mc{P} = \mc{P}(\C ^5)=
\bigoplus _{\sigma \in \mc{R}(\tilde{K}, \omega)} \mathscr{I}_{\sigma}
\]
be the decomposition into $\tilde{K}$-isotypical components; the
notation $\mc{R}(\tilde{K}, \omega)$ refers to the isomorphism
classes of representations of $\tilde{K}$ that occur in the oscillator
representation. We recall the definition of harmonics. The Lie algebra
$\mfr{k} = \mfr{o}_{3}\times \mfr{o}_2$ of the maximal compact
 subgroup of $G = O(V)$ is a member of a dual reductive pair
$(\mfr{k}, \mfr{l}')$. In this case,
$\mfr{l}' = \mfr{sl}_2 (\R) \times  \mfr{sl}_2 (\R)$. We can decompose
\[
\mfr{l}' = \mfr{l} ^{'(2, 0)}\oplus \mfr{l}^{'(1, 1)}\oplus \mfr{l}^{'(0, 2)},
\quad \mr{where\ \ \ } \mfr{l} ^{'(i, j)}=  \mfr{l}'\cap \mfr{sp}^{(i, j)}.
\]
Then the harmonics are defined by
\begin{align*}
\mathscr{H}(K)&=\mathscr{H}(\tilde{K})=
\left \{
P \in \mc{P} : l(P) = 0  \mr{\ for\ all\ } l \in \mfr{l}^{'(0, 2)}
\right \}
\\
\mathscr{H}(K)_{\sigma} &= \mathscr{H}(K)\cap \mathscr{I}_{\sigma}
\end{align*}
The crucial point for us the Howe's result \cite[p.542]{rH2}:
\begin{thm}
\label{T:howe}
For each $\sigma \in \mc{R}(\tilde{K}, \omega)$:
\begin{enumerate}
\item[1.] The space $\mathscr{H}(K)_{\sigma}$ consists precisely
of the polynomials of lowest degree in $\mathscr{I}_{\sigma}$;
these are homogeneous all of the same degree, $\deg (\sigma)$.
\item[2.]
\[
\mathscr{I}_{\sigma} = \mathscr{U} (\mfr{g}')\cdot \mathscr{H}(K)_{\sigma}
= \mathscr{U} \left (\mfr{l}^{'(2, 0)}\right )\cdot \mathscr{H}(K)_{\sigma}
\]
where $\mathscr{U}$ denotes the universal enveloping algebra of the
respective Lie algebra.
\end{enumerate}
\end{thm}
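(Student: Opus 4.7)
The plan is to analyse $\mathcal{P}(\mathbb{C}^5)$ as a graded module under the joint action of $\tilde{K}$ and $\mathfrak{l}'$, and then to apply Howe's structure theorem for the oscillator representation restricted to a dual reductive pair with a compact member.

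Three elementary facts frame the argument. The space $\mathcal{P}$ is non-negatively graded by total polynomial degree; this grading is preserved by $\tilde{K}$, because the action of $\mathfrak{k}$ is by trace-zero vector fields and so the metaplectic shift $\tfrac{1}{2}\delta^i_j$ contributes nothing; and under the triangular decomposition $\omega(\mathfrak{sp}_{10}\otimes\mathbb{C})=\mathfrak{sp}^{(1,1)}\oplus\mathfrak{sp}^{(2,0)}\oplus\mathfrak{sp}^{(0,2)}$ recalled above, $\mathfrak{l}^{'(2,0)}$ raises degree by $2$, $\mathfrak{l}^{'(1,1)}$ preserves it, and $\mathfrak{l}^{'(0,2)}$ lowers it by $2$. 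Because $\mathfrak{l}'$ centralises $\mathfrak{k}$, each $\tilde{K}$-isotypic piece $\mathscr{I}_\sigma$ is a graded $\mathfrak{l}'$-submodule, and any polynomial of the smallest occurring degree in $\mathscr{I}_\sigma$ is automatically annihilated by $\mathfrak{l}^{'(0,2)}$ and therefore lies in $\mathscr{H}(K)_\sigma$. This is the easy half of~(1).

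The reverse inclusion in~(1) and the second equality in~(2) then come from Howe's structure theorem \cite{rH2} applied to the compact member $\tilde{K}$ of the dual pair $(\tilde{K},\tilde{L}')$: the oscillator representation decomposes canonically as $\mathcal{P}=\bigoplus_\sigma V_\sigma\otimes M_\sigma$, where $V_\sigma$ is the $\tilde{K}$-type and $M_\sigma$ is an irreducible $\mathfrak{l}'$-module of lowest weight with respect to the triangular decomposition of $\mathfrak{l}'$. The $\mathfrak{l}^{'(0,2)}$-annihilated subspace of $M_\sigma$ is one-dimensional and concentrated in a single degree, so $\mathscr{H}(K)_\sigma\cong V_\sigma$ is homogeneous of degree $d_\sigma=\deg(\sigma)$; the standard generation statement $M_\sigma=\mathcal{U}(\mathfrak{l}^{'(2,0)})\cdot(\text{lowest weight vector})$ for such modules then yields $\mathscr{I}_\sigma=\mathcal{U}(\mathfrak{l}^{'(2,0)})\cdot\mathscr{H}(K)_\sigma$.

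For the first equality in~(2), where the strictly smaller $\mathcal{U}(\mathfrak{g}')$ with $\mathfrak{g}'=\mathfrak{sp}(W)\subset\mathfrak{l}'$ must already suffice, I would superimpose the Howe decomposition $\mathcal{P}=\bigoplus_\pi V_\pi\otimes N_\pi$ coming from the original dual pair $(\tilde{O}(V),\tilde{\mathrm{Sp}}(W))$, in which $N_\pi$ is an irreducible $\mathfrak{g}'$-lowest-weight module. The seesaw compatibility of the two Howe dualities $(\tilde{K},\tilde{L}')$ and $(\tilde{O}(V),\tilde{\mathrm{Sp}}(W))$, reinforced by the stable range hypothesis built into the setting, should identify $\mathscr{H}(K)_\sigma$ with the $\mathfrak{g}^{'(0,2)}$-lowest weight subspace inside the relevant $N_\pi$, after which the $\mathfrak{sl}_2$-theory shows that $\mathcal{U}(\mathfrak{g}')$ alone sweeps out $N_\pi$ from this subspace. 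The hardest step is precisely this reconciliation of the two refinements of $\mathscr{I}_\sigma$: one has to verify that the same harmonic space simultaneously plays the role of $\mathfrak{l}^{'(0,2)}$-lowest weight space inside $M_\sigma$ and of $\mathfrak{g}^{'(0,2)}$-lowest weight space inside the appropriate $N_\pi$, a compatibility that depends crucially on the stable range to exclude the multiplicities that would otherwise obstruct it.
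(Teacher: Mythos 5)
The paper offers no proof of this statement: it is quoted verbatim from Howe \cite[p.~542]{rH2}, so any argument you supply is necessarily ``different from the paper.'' Your first two paragraphs are sound and constitute the classical part of the result: the grading argument showing that the lowest-degree homogeneous elements of $\mathscr{I}_\sigma$ are killed by $\mathfrak{l}^{'(0,2)}$, and the separation-of-variables theory for the compact pair $(\mathfrak{k},\mathfrak{l}')=(\mathfrak{o}_3\times\mathfrak{o}_2,\ \mathfrak{sl}_2\times\mathfrak{sl}_2)$, i.e.\ $\mathcal{P}(\C^3)\otimes\mathcal{P}(\C^2)=\bigoplus(\mathscr{H}_{d_1}\otimes\mathscr{H}_{d_2})\otimes\C[r_\alpha^2,r_\mu^2]$, which together give part (1) and the second equality of part (2).

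The third paragraph, however, has a genuine gap, and the proposed mechanism cannot be repaired as stated. First, for the noncompact pair $(\tilde{O}(3,2),\widetilde{\mathrm{SL}}_2(\R))$ there is no decomposition $\mathcal{P}=\bigoplus_\pi V_\pi\otimes N_\pi$ with $N_\pi$ irreducible of lowest weight: the $\tilde{U}(1)$-weight of a monomial is $\tfrac12+\deg_\alpha-\deg_\mu$, which is unbounded below on $\mathcal{P}(\C^5)$, so $\mathcal{P}$ is not even a sum of lowest-weight $\mathfrak{g}'$-modules; and invoking Howe duality for this pair is circular, since the theorem you are proving is an input to that duality. Second, and concretely, the identification of $\mathscr{H}(K)_\sigma$ with a $\mathfrak{g}'$-lowest-weight space is false because $\mathfrak{g}'\cap\mathfrak{sp}^{(0,2)}=0$: the lowering operator of $\mathfrak{g}'$ is of the form $Y_\alpha+X_\mu$ (notation as in the proof of Proposition \ref{P:harmonics}), whose $X_\mu$-component is multiplication by $-\tfrac12(z_4^2+z_5^2)$; applied to the harmonic $1\in\mathscr{H}(K)_{\mathbf{1}\otimes\mathbf{1}}$ it gives $-\tfrac12(z_4^2+z_5^2)\neq 0$. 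The degree filtration governing $K$-harmonics and the Harish-Chandra decomposition of $\mathfrak{g}'$ are transverse, and reconciling them is precisely the nontrivial content of the first equality in (2). Third, the stable range plays no role here: Howe's theorem holds for every type I dual pair, and the hypothesis $n<m/4$ enters this paper only through Theorem \ref{T:KM2}. What is actually required is Howe's direct argument that $\mathscr{U}(\mathfrak{g}')\cdot\mathscr{H}(K)_\sigma$ already contains $\mathscr{U}\bigl(\mathfrak{l}^{'(2,0)}\bigr)\cdot\mathscr{H}(K)_\sigma$, an induction on degree in the Fock model; absent that, you should do as the paper does and simply cite \cite[p.~542]{rH2}.
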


\subsection{}
\label{SS:fock4} 
Let $D$ be the Hermitian symmetric domain attached to
the Lie group $\mr{SO}_0 (3, 2)$. This is isomorphic to the Siegel
half space $\Sg{2}$ via the isogeny $\mr{Sp}_4 (\R)) \to \mr{SO}_0 (3, 2)$.
The tangent bundle to $D$ is the homogeneous vector bundle associated
to the action of the maximal compact $K_0 = \mr{SO}(3)\times \mr{SO}(2)$
on $\mfr{p} := \mfr{g}/\mfr{k}$ via the adjoint representation. The complex
structure is given by the action of the subgroup
$\mr{SO}(2)\subset \mr{SO}(3)\times \mr{SO}(2)$, and we have a decomposition
into Hodge types $\mfr{p}^*_{\C}  = \mfr{p}^{(1, 0)}\oplus \mfr{p}^{(0, 1)}$,
where
\[
k(\theta) = \begin{pmatrix}
            \cos (\theta) & \sin(\theta)\\
             -\sin(\theta)& \cos(\theta)
            \end{pmatrix} \in \mr{SO}(2)
\]
acts as $\exp(i \theta)$ (resp. $\exp( -i \theta)$ )
on $\mfr{p}^{(1, 0)}$ (resp. $\mfr{p}^{(0, 1)}$).
We are interested in the bundle of $(1, 1)$-forms on $D$
which is a subrepresentation
\[
\wedge ^{1, 1}\mfr{p}^*\subset \wedge ^{2}\mfr{p}^*_{\C};
\]
in fact, as $\mr{SO}(3)\times \mr{SO}(2)$-module, the $\mr{SO}(2)$-factor
acts trivially, and the $\mr{SO}(3)$-module splits as $\mbf{1}\oplus\mbf{3}\oplus \mbf{5}$
where, for each odd integer $i$, $\mbf{i}$ is the unique
corresponding irreducible
representation of $\mr{SO}(3)$. Thus, as $\mr{SO}(3)\times \mr{SO}(2)$-module,
\[
\wedge ^{1, 1}\mfr{p}^* \cong
\mbf{1}\otimes \mbf{1}\oplus\mbf{3}\otimes \mbf{1}\oplus\mbf{5}\otimes \mbf{1}.
\]
As these are self-dual, we have the same decomposition for
$\wedge ^{1, 1}\mfr{p}$.
The theta-lifting kernels relevant to us will define classes in
$\mr{H}^{1, 1}(\mfr{g}, K; \mathcal{P}(\C ^5))$, which is a subquotient
of
\[
\mr{Hom}_{K} (\wedge ^{1, 1}\mfr{p}, \mathcal{P}(\C ^5)) =
\mr{Hom}_{K} (\wedge ^{1, 1}\mfr{p}, \mathscr{I}_{\mbf{1}\otimes \mbf{1}} )
\oplus
\mr{Hom}_{K} (\wedge ^{1, 1}\mfr{p}, \mathscr{I}_{\mbf{3}\otimes \mbf{1}} )
\oplus
\mr{Hom}_{K} (\wedge ^{1, 1}\mfr{p}, \mathscr{I} _{\mbf{5}\otimes \mbf{1}})
\]
In fact, only the first and last summand above will contribute to the Picard
group, as we will see. 
Since these isotypical spaces are generated by their harmonics, we need to
analyze those.
\begin{prop}
\label{P:harmonics}
\begin{enumerate}
\item[1.] $\mathscr{H}(K)_{\mbf{1}\otimes \mbf{1}}$ is the one dimensional
$\C$-vector space spanned by $1\in \mathcal{P}(\C^5)$.
\item [2.]  $\mathscr{H}(K)_{\mbf{3}\otimes \mbf{1}}$ is the three dimensional
$\C$-vector space spanned by the $z_{\alpha}\in \mathcal{P}(\C^5)$.
\item[3.] $\mathscr{H}(K)_{\mbf{5}\otimes \mbf{1}}$ is the five dimensional
$\C$-vector space consisting of quadratic forms
\[
\sum _{\alpha, \beta  = 1}^3 c_{\alpha \beta}z_{\alpha }z_{\beta},
\quad c_{\alpha \beta} = c_{\beta\alpha} \in \C,
\quad
\mr{with\ \ } \sum _{\alpha = 1}^3 c_{\alpha \alpha} = 0.
\]
\end{enumerate}
\end{prop}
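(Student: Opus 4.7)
My plan is to compute $\mathscr{H}(K)_\sigma$ directly from the Fock model, using Theorem~\ref{T:howe}. Since the dual pair involved is $(\mfr{k},\mfr{l}') = (\mfr{o}_3\oplus\mfr{o}_2,\ \mfr{sl}_2\oplus\mfr{sl}_2)$, with each $\mfr{sl}_2$-factor paired with one of the orthogonal factors, the $(0,2)$-component $\mfr{l}^{\prime(0,2)}$ is two-dimensional, spanned by the partial Laplacians
\[
\Delta_{+} \ =\ \sum_{\alpha=1}^{3}\frac{\partial^{2}}{\partial z_\alpha^{2}},\qquad
\Delta_{-} \ =\ \sum_{\mu=4}^{5}\frac{\partial^{2}}{\partial z_\mu^{2}}.
\]
Hence $P\in\mc{P}(\C^{5})$ is harmonic if and only if $\Delta_{+}P=\Delta_{-}P=0$. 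By Theorem~\ref{T:howe}(1), $\mathscr{H}(K)_\sigma$ coincides with the lowest-degree part of $\mathscr{I}_\sigma$, all of a single homogeneity $\deg(\sigma)$. It therefore suffices, for each $\sigma$, to locate the smallest $d$ in which $\sigma$ occurs in $\mc{P}^{d}$, identify the $\sigma$-isotype there, and verify that both Laplacians annihilate it.

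Part (1) is immediate: $1\in\mc{P}^{0}$ gives the trivial $K$-type and is harmonic. For (2), the type $\mbf{3}\otimes\mbf{1}$ does not occur in $\mc{P}^{0}$; under $K$ the decomposition of $\mc{P}^{1}=\bigoplus_i \C z_i$ is $\mbf{3}\otimes\mbf{1}$ on $\C z_1\oplus\C z_2\oplus\C z_3$ plus a copy of $\mbf{1}\otimes(\text{2-dim }O(2)\text{-rep})$ on $\C z_4\oplus\C z_5$; the latter contributes nothing to $\mbf{3}\otimes\mbf{1}$. Any degree-1 polynomial is killed by the second-order operators $\Delta_{\pm}$, so (2) follows. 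For (3), the rep $\mbf{5}$ first appears in $\mr{Sym}^{2}$ of the $O(3)$-standard, so in $\mc{P}^{2}=\mr{Sym}^{2}(\mc{P}^{1})$. Decomposing under $K$, the only $\mbf{5}\otimes\mbf{1}$-isotype there is the 5-dimensional space of symmetric quadratic forms $\sum_{\alpha,\beta} c_{\alpha\beta}z_\alpha z_\beta$ with $\sum_\alpha c_{\alpha\alpha}=0$ (the trace carries $\mbf{1}\otimes\mbf{1}$; monomials involving any $z_\mu$ carry nontrivial $O(2)$-characters). These are automatically killed by $\Delta_{-}$, and $\Delta_{+}\bigl(\sum c_{\alpha\beta}z_\alpha z_\beta\bigr)=2\sum_\alpha c_{\alpha\alpha}=0$ by the trace-free condition.

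The one substantive step is the degree count, ruling out earlier occurrences of $\mbf{3}$ in $\mc{P}^{0}$ and of $\mbf{5}$ in $\mc{P}^{\leq 1}$. This follows from standard $SO(3)$-weight theory: a highest-weight vector for $\mbf{2\ell+1}$ has weight $\ell$ with respect to a chosen Cartan of $\mfr{sl}_2(\C)$, whereas the maximal weight appearing in $\mr{Sym}^{d}$ of the weight-$1$ defining rep is $d$; hence $\mbf{2\ell+1}\subset \mr{Sym}^{d}(\mbf{3})$ forces $d\geq\ell$. In particular $\mbf{3}$ first appears at $d=1$ and $\mbf{5}$ first at $d=2$, completing the proof.
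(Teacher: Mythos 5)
Your proposal is correct and follows essentially the same route as the paper: you identify $\mfr{l}^{\prime(0,2)}$ with the two partial Laplacians (the paper's $Y_\alpha, Y_\mu$ up to a factor of $2$), locate the lowest-degree occurrence of each $K$-type in $\mc{P}^0$, $\mc{P}^1$, $\mc{P}^2$ via the decomposition $S^2(\C^3\oplus\C^2)$, and invoke Howe's theorem to conclude that these lowest-degree isotypes are exactly the harmonics. Your explicit $SO(3)$-weight bound $d\ge \ell$ is a slightly more careful justification of the degree count that the paper only asserts informally, but it is a refinement, not a different argument.
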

\begin{proof}
Define
\begin{alignat*}{3}
X_{\alpha}&= -\frac{1}{2} \sum_{\alpha = 1}^{3} z_{\alpha}^2,  &
\quad
Y_{\alpha}&= \frac{1}{2} \sum_{\alpha = 1}^{3}
\frac{\partial ^2}{\partial z_{\alpha}^2}, &
\quad
H_{\alpha}&= \frac{1}{2}\sum_{\alpha = 1}^{3}
\left (
z_{\alpha}\frac{\partial }{\partial  z_{\alpha}}
+ \frac{\partial }{\partial  z_{\alpha}} z_{\alpha}
\right )\\
X_{\mu}&= -\frac{1}{2} \sum_{\mu = 4}^{5} z_{\mu}^2,  &
\quad
Y_{\mu}&= \frac{1}{2} \sum_{ \mu= 4}^{5}
\frac{\partial ^2}{\partial z_{\mu}^2}, &
\quad
H_{\mu}&= \frac{1}{2}\sum_{\mu = 4}^{5}
\left (
z_{\mu}\frac{\partial }{\partial  z_{\mu}}
+ \frac{\partial }{\partial  z_{\mu}} z_{\mu}
\right )
\end{alignat*}
One verifies that each of these satisfies the relations for
$\mfr{sl}_2 (\R)$: $[H, X] = 2 X$, $[H, Y] = -2 Y$
$[X, Y] = H$. Evidently, $\mfr{l}'_{\alpha}$ =
$\mr{span}\{ H_{\alpha}, X_{\alpha}, Y_{\alpha}\}$
commutes with $\mfr{l}'_{\mu}$ =
$\mr{span}\{ H_{\mu}, X_{\mu}, Y_{\mu}\}$
and one checks that $\mfr{l}' = \mfr{l}'_{\alpha}\times \mfr{l}'_{\mu}$
commutes with the operators in $\omega (\mfr{k})$ where
$\mfr {k} = \mfr{o}(V) = \mfr{o}_{3,2}$ is the maximal
compact of $\mfr{o}(3, 2)$: one must compute that
all these operators $H_i, X_i, Y_i$ commute with
the operators $\omega (X_{\alpha\beta})$,  $\omega (X_{\mu\nu})$
that appear in \cite[Theorem 7.1, p. 155]{KM4}, which they do.
This gives explicit formulas for the dual reductive pair
$(\mfr{k}, \mfr{l}' = \mfr{l}'_{\alpha}\times \mfr{l}'_{\mu})$.
\par
The space $\mfr{l} ^{'(0, 2)}$ is spanned by the operators
 $Y_{\alpha}, \ Y_{\mu}$. Recalling that the first factor
 in $\mr{O}_3 \times \mr{O}_2$ acts by rotation in the variables
 $z_{\alpha}$ and the second factor acts on the variables
  $z_{\mu}$, it is first of all clear that there are no constant 
  or linear polynomials in $\mc{P}(\C^5)$ in the representation
  $\mbf{5}\otimes \mbf{1}$. It is also clear that the space of polynomials
  mentioned in the statement of the proposition are harmonic: they are
  annihilated by the operators $Y_{\alpha}, \ Y_{\mu}$, and do constitute
  a representation of type $\mbf{5}\otimes \mbf{1}$. This shows that 
  $\deg (\mbf{5}\otimes \mbf{1}) = 2$, and there cannot be any other 
  harmonic quadratic forms in the representation  $\mbf{5}\otimes \mbf{1}$. \\
\\
An Alternative Proof: It suffices to find the $SO(3) \times SO(2)$-modules in $\mathscr{I} _{\mbf{1}\otimes \mbf{1}}$ and $\mathscr{I} _{\mbf{5}\otimes \mbf{1}}$ that are of lowest degrees. These modules, according to Howe, are unique, homogeneous and harmonic. 
 The degree zero polynomials in $\mathcal P(\mathbb C^5)$, yield a trivial $SO(3) \times SO(2)$-module. Therefore, $\mathscr{H}(K)_{\mbf{1}\otimes \mbf{1}}$ is the one dimensional
$\C$-vector space spanned by $1\in \mathcal{P}(\C^5)$. The degree 
$1$ polynomials yield a standard representation
$\mbf{3} \otimes \mbf{1}$ of $SO(3) \times SO(2)$. 
They are of lowest degree in $\mathscr{I}_{\mbf{3} \otimes \mbf{1}}$.
Therefore  $\mathscr{H}(K)_{\mbf{3}\otimes \mbf{1}}$ is the three dimensional
$\C$-vector space spanned by the $z_{\alpha}\in \mathcal{P}(\C^5)$.
The space of degree 
$2$ polynomials is simply 
$$S^2(\mathbb C^3 \oplus \mathbb C^2) \cong S^2(\mathbb C^3) \oplus 
\mathbb C^3 \otimes \mathbb C^2 \oplus S^2(\mathbb C^2).$$
The first summand decomposes into 
$\mbf{5}\otimes \mbf{1} \oplus \mbf{1} \otimes \mbf{1}$. 
The $\mbf{5} \otimes \mbf{1}$ summand is spanned by
\[
\sum _{\alpha, \beta  = 1}^3 c_{\alpha \beta}z_{\alpha }z_{\beta},
\quad c_{\alpha \beta} = c_{\beta\alpha} \in \C,
\quad
\mr{with\ \ } \sum _{\alpha = 1}^3 c_{\alpha \alpha} = 0.
\]
Clearly, it
is of the lowest degree in $\mathscr{I} _{\mbf{5}\otimes \mbf{1}}$. It must be equal to $\mathscr{H}(K)_{\mbf{5}\otimes \mbf{1}}$.

\end{proof}

\subsection{}
\label{SS:fock5}
Kudla and Millson define   
\[
\boldsymbol{\varphi}^{+} = \sum _{\alpha, \beta = 1}^{3} 
z_{\alpha}z_{\beta}\, \omega _{\alpha 4}\wedge\omega _{\beta 5}
\]
This is an element of 
$\mr{Hom}_{K} (\wedge ^{1, 1}\mfr{p}, \ \mc{P})$. It is clear that 
$\boldsymbol{\varphi}^{+} $ induces a $K$-isomorphism from 
$\wedge ^{1, 1}\mfr{p},$ and $\mc{P} _{2,\alpha}$, the space of
quadratic forms in the variables $z_{\alpha}$, $\alpha = 1, 2, 3$. 
As representation spaces these are 
$\mbf{5}\otimes \mbf{1} \oplus \mbf{1}\otimes \mbf{1}$. Thus 
$\boldsymbol{\varphi}^{+} $ induces an isomorphism of isotypical spaces
\[
\begin{CD}
\left ( \wedge ^{1, 1}\mfr{p}\right ) _{\mbf{5}\otimes \mbf{1} }
 @>\sim >> 
 \left (\mc{P} _{2,\alpha} \right ) _{\mbf{5}\otimes \mbf{1} },
\end{CD}
\] 
which are irreducible representations of $K$, the right-hand side
being described in proposition \ref{P:harmonics}. It is 
easily seen that $d \boldsymbol {\varphi}^{+} = 0$, so that 
we have a class $[\kmphi] \in \mr{H}^{1, 1}(\mfr{g}, K; \ \mc{P})$.

\subsection{}
\label{SS:fock6}

Recall that the $\tilde{U(5)}$-finite vectors in the Fock model 
are the polynomials in $\mc{P}(\C^5)$. Given any $(\mfr{g}, \ K)$-module homomorphism
$\mc{P}(\C^5) \to \mc{A} (\Gamma \backslash G)$
to the space of automorphic forms, $G = \mr{SO}(3, 2)$, 
we get a map
\[
 \mr{H}^{1, 1}(\mfr{g}, K; \ \mc{P})\longrightarrow
  \mr{H}^{1, 1}(\mfr{g}, K; \ \mc{A} (\Gamma \backslash G)).
\]
For our purposes these are given by theta lifting. Let 
$\sigma \subset \mc{A} (\Gamma '\backslash G')$, 
$G' = \tilde{\mr{SL}}_2 (\R)$, be the space of holomorphic
cusp forms of weight $5/2$ belonging to an irreducible 
representation of $G'$. Given a linear functional 
$\Theta : \mc{P}(\C^5) \to \C$ with the
property that $\Theta (\omega (\gamma ', \gamma)\varphi) = 
\Theta (\varphi)$ for all $\varphi \in \mc{P}(\C ^5) $,
all $(\gamma ', \gamma) \in \Gamma ' \times \Gamma$,  
defining the theta kernel as 
$\theta _{\varphi}(g', g) = \Theta (\omega (g', g)\varphi)$, we 
define, for any $f \in \sigma$, 
\[
\theta _{\varphi}(f ) (g) = \int _{\Gamma '\backslash G'}
\theta _{\varphi}(g', g) f(g') dg', 
\]
and let $\theta_{\varphi}(\sigma) \subset  \mc{A} (\Gamma \backslash G)$
be the space spanned by the $\theta _{\varphi}(f )$. Note 
that, for any fixed $f$, the map $\varphi \mapsto \theta _{\varphi}(f )$
is a $(\mfr{g}, \ K)$-module homomorphism 
$\mathscr{S} (\C ^5)_{(K)} \to \mc{A} (\Gamma \backslash G)$.
Hence, each $f \in \sigma$ defines a map 
 
\[
\begin{CD}
 \mr{H}^{1, 1}(\mfr{g}, K; \ \mc{P}) @>\theta (f)>>
  \mr{H}^{1, 1}(\mfr{g}, K; \ \mc{A} (\Gamma \backslash G)).
\end{CD}  
\]
We define $\theta _{\kmphi} (f) := \theta (f)([\kmphi])$.
Finally note that the symbols $\theta (f)$, 
etc., defined here are ambiguous in that they 
depend on the initial choice of functional $\Theta$. 
In the theory of special cycles, functionals $\Theta$ are
constructed by summing over subsets of the form 
$x + L \subset \R ^5$ for rational vectors $x$ and lattices $L$. By varying $x$ and 
$L$ we obtain all the special cycles in that theory. This 
is best formulated in adelic language. 
It will always be assumed that our functionals
have this form. The more precise notation will be 
$\theta ^{x, L} _{\varphi} (f)$, etc., but we will follow Kudla and Millson
in simply writing $\theta _{\varphi} (f)$ when reference to the specific form of
the kernel is not needed. We will need:
\begin{lem}
\label{L:exchange}
We have $\theta _{Z \varphi} (f) = \theta _{\varphi} (Z^*f) $ for 
the involution $Z \to Z^*$ induced by the map
$g \to g^{-1}$ of $G'$. 
\end{lem}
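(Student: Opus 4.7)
The plan is to prove this by a standard ``integration by parts on the group'' argument: move the universal enveloping algebra action across the integral defining the theta lift by exploiting the joint equivariance of the theta kernel and the left-invariance of Haar measure on $\Gamma'\backslash G'$. By the antiautomorphism property $(ZW)^* = W^*Z^*$, it suffices to treat $Z \in \mathfrak{g}'$ (where the involution reduces to $Z \mapsto -Z$), and then extend to $\mathscr{U}(\mathfrak{g}')$ by induction on degree.

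The key observation is that $\omega$ is a representation, so for $Z \in \mathfrak{g}'$ the action on $\varphi$ is obtained by differentiating the group action:
\[
\omega(g',g)\,\omega(Z)\varphi \;=\; \frac{d}{dt}\bigg|_{t=0} \omega\bigl(g'\exp(tZ),\, g\bigr)\varphi.
\]
Applying the functional $\Theta$ and pairing against $f$, I would write
\[
\theta_{Z\varphi}(f)(g) \;=\; \int_{\Gamma'\backslash G'} \frac{d}{dt}\bigg|_{t=0} \theta_{\varphi}\bigl(g'\exp(tZ),\,g\bigr)\,f(g')\,dg'.
\]
Then I would swap the derivative and the integral (justified by the rapid decay of $f$, which is a holomorphic cuspform of weight $5/2$, together with the Gaussian-type growth of the Fock-model kernel), perform the change of variables $g' \mapsto g'\exp(-tZ)$ using the left-invariance of Haar measure, and differentiate to land on $f$. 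Since right translation by $\exp(-tZ)$ differentiates to $-Z$ acting as a right-invariant vector field, this yields $\theta_{Z\varphi}(f) = \theta_{\varphi}(-Zf) = \theta_{\varphi}(Z^*f)$, as desired.

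The induction step for general $Z \in \mathscr{U}(\mathfrak{g}')$ is then formal: if the identity holds for $Z$ and for $W$ of lower degree, then $\theta_{ZW\varphi}(f) = \theta_{W\varphi}(Z^*f) = \theta_{\varphi}(W^*Z^*f) = \theta_{\varphi}((ZW)^*f)$, using that $(ZW)^* = W^*Z^*$ in the universal enveloping algebra.

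The only delicate point is the exchange of differentiation and integration, i.e.\ convergence of the relevant integrals uniformly in $t$. Since we are in the stable range $1 < (3+2)/4$ and $f$ is cuspidal, the theta integral converges absolutely and can be differentiated under the integral sign termwise; this is exactly the setting in which the Kudla--Millson/Tong--Wang theory of special cycles is developed, so no new analytic input is required beyond what is already invoked implicitly in defining $\theta_{\varphi}(f)$.
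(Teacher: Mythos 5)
Your proof is correct and is essentially the paper's own argument: both rest on the equivariance of $\varphi \mapsto \theta_{\varphi}$ under $\mathscr{U}(\mfr{g}')$ (so that $\theta_{Z\varphi} = Z\theta_{\varphi}$ in the $g'$-variable) followed by integration by parts over $\Gamma'\backslash G'$ to move $Z$ onto $f$ as $Z^*$. You merely unpack these two steps — differentiating the group translation, changing variables in the Haar integral, and inducting from $\mfr{g}'$ up to $\mathscr{U}(\mfr{g}')$ via the antiautomorphism property — in more detail than the paper does.
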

Proof: The map $Z \rightarrow Z^*$ is given by
 $$ X_1 X_2 \ldots X_n \rightarrow (-1)^n X_n X_{n-1} \ldots X_1,$$
 and it is complex conjugate linear. Recall that $\theta: \varphi \in \mc P \rightarrow \theta_{\varphi} \in C^{\infty}(G^{\prime}, G)$ preserves the actions of $\mathscr{U}(\mathfrak g)$ and $\mathscr{U}(\mathfrak g^{\prime})$. $\forall \, Z \in \mathscr{U}(\mathfrak g^{\prime})$, we obtain
 \begin{align*}
\theta _{Z \varphi} (f) &= \int _{\Gamma '\backslash G'}
\theta _{Z \varphi}(g', g) f(g') dg'\\
&= \int _{\Gamma '\backslash G'}
Z(\theta _{\varphi})(g', g) f(g') dg' \\
&= \int _{\Gamma '\backslash G'}
\theta _{\varphi}(g', g) Z^*f(g') dg' \\
&= \theta _{\varphi} (Z^*f).
\end{align*}

\subsection{}
\label{SS:fock7}
According to Weissauer's theorems, any cohomology class
$\xi \in \mr{H}^{1, 1} (X_{\Gamma}, \C)_0$ in  the complement 
to the Lefschetz class occurs in 
$\mr{H}^{1, 1}(\mfr{g}, K; \theta (\sigma))$ where
$\theta (\sigma)\subset \mc{A}(\Gamma ' \backslash G')$ 
is a theta-lifting (see section \ref{SS:fock6}) 
belonging to a space $\sigma$ of holomorphic cusp forms of weight 
$5/2$ for
$G' = \tilde{\mr{SL}}_2(\R)$.
We can lift this to an element
$\xi \in 
\mr{Hom}_{K} (\wedge ^{1, 1}\mfr{p}, \ \theta (\sigma))$,
and without loss of generality we can assume that it factors as 
\[
\begin{CD}
\xi : \wedge ^{1, 1}\mfr{p}@>>>
\left ( \wedge ^{1, 1}\mfr{p}\right ) _{\mbf{5}\otimes \mbf{1}}
@>\xi _{0}>> [\theta (\sigma)]_{\mbf{5}\otimes \mbf{1}}
\end{CD}
\]
where the first arrow is projection onto the isotypical component, 
and the second arrow is an injection of $K$-modules.
These assertions follow from Vogan-Zuckerman theory \cite{VZ}:
any such class will factor through $\mr{H}^{1, 1}(\mfr{g}, K; A_{\mfr{q}})$ 
for an inclusion of the cohomological representation
$A_{\mfr{q}} \to \theta (\sigma)$. But 
the minimal $K$-type in $A_{\mfr{q}}$ is ${\mbf{5}\otimes \mbf{1}}$, 
with multiplicity one. According 
to the isomorphism in section \ref{SS:fock5} each 
$\varphi  \in \mc{P}(\C ^5)_{2, \alpha} =\mathscr{H}(K)_{\mbf{5}\otimes \mbf{1}} $ is 
equal to $\mbf{\varphi}^+ (v)$ for a unique 
$v \in \left (\wedge ^{1, 1}\mfr{p}\right ) _{\mbf{5}\otimes \mbf{1}}$.
Thus, for any $v \in \left (\wedge ^{1, 1}\mfr{p}\right ) _{\mbf{5}\otimes \mbf{1}}$ we can write, applying Howe's main result 
theorem \ref{P:harmonics},
\begin{align*}
\xi _0 (v) &= \sum \theta ^{x_j, L_j}_{\varphi _j} (f_j), \quad 
\varphi _j \in \mc{P}(\C ^5)_{\mbf{5}\otimes \mbf{1}}, 
\ f_j \in \sigma\\
&= \sum \theta ^{x_j, L_j} _{Z_j \varphi _{j}^{0}} (f_j), 
\quad Z_j \in \mathscr{U}(\mfr{g}'), \ 
\varphi _{j}^{0} \in \mc{H}(K) _{\mbf{5}\otimes \mbf{1}}\\
&=\sum \theta  ^{x_j, L_j}_{\varphi _{j}^{0}} (Z^*_j f_j), \quad
\mr{by\ lemma \ } \ref{L:exchange}\\
&=\sum \theta ^{x_j, L_j} _{\kmphi (v_j)} (Z^*_j f_j), \quad 
v_j \in \left (\wedge ^{1, 1}\mfr{p}\right ) _{\mbf{5}\otimes \mbf{1}}\\
&=\sum \theta  ^{x_j, L_j}_{\kmphi } (g_j) (v_j), \quad
g_j = Z^*_j f_j \in \sigma
\end{align*}
where the last line interprets 
$\theta _{\kmphi } (g_j) $ as a $K$-morphism
$\left (\wedge ^{1, 1}\mfr{p}\right ) _{\mbf{5}\otimes \mbf{1}}
\to \theta (\sigma )$. This calculation shows that the image of the 
$K$-morphism $\xi _0$ is contained in the sum of the images
of the $K$-morphisms $\theta _{\kmphi } (g_j) $ and since
the source of these maps is an irreducible $K$-representation, 
Schur's lemma implies that we must have 
$\xi _0 = \sum c_j \theta _{\kmphi } (g_j) $ for some 
$c_j \in \C$ or in other words, we have proved:
\begin{prop}
\label{P:main1}
Any cohomology class
$\xi \in \mr{H}^{1, 1} (X_{\Gamma})$ in  the complement 
of the Lefschetz class can be 
written as a finite linear combination of 
$\theta ^{x, L} _{\kmphi} (f)$ where $f \in \sigma$, and where 
$\sigma$ is an irreducible automorphic representation
belonging to the holomorphic cusp forms of weight $5/2$
for $\tilde{\mr{SL}}_2(\R)$, and $\kmphi$ is the 
special element of Kudla-Millson.  
\end{prop}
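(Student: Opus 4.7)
The plan is to combine Weissauer's description of $\mr{H}^{1,1}$ with Vogan-Zuckerman theory and Howe's structure theorem for the oscillator representation, in order to express any class $\xi$ in the complement of the Lefschetz class through the single Kudla-Millson kernel $\kmphi$.

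First, I would apply Weissauer's theorem (section \ref{SSS:adelic3.3}) to see that $\xi$ lies in the image of $\mr{H}^{1,1}(\mfr{g},K;\theta(\sigma))$ for some irreducible space $\sigma$ of holomorphic cuspidal automorphic representations of weight $5/2$ on $\tilde{\mr{SL}}_2(\R)$. I then lift this to a $K$-morphism $\wedge^{1,1}\mfr{p} \to \theta(\sigma)$. By Vogan-Zuckerman, any such cohomology class factors through a cohomological representation $A_{\mfr{q}}$ mapping into $\theta(\sigma)$, and the unique minimal $K$-type of $A_{\mfr{q}}$ relevant for $(1,1)$-classes (other than the Lefschetz summand) is $\mbf{5}\otimes\mbf{1}$ with multiplicity one. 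Hence $\xi$ factors as projection onto the isotypical component $(\wedge^{1,1}\mfr{p})_{\mbf{5}\otimes\mbf{1}}$ followed by an injective $K$-homomorphism $\xi_0$ into $[\theta(\sigma)]_{\mbf{5}\otimes\mbf{1}}$.

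Next, I would realize $\xi_0$ concretely as a sum of theta kernels $\theta^{x,L}_{\varphi}(f)$ with $\varphi \in \mc{P}(\C^5)_{\mbf{5}\otimes\mbf{1}}$ and $f \in \sigma$. Since $\varphi$ lies in a single $\tilde{K}$-isotypic component, Howe's theorem (theorem \ref{T:howe}) gives $\mathscr{I}_{\mbf{5}\otimes\mbf{1}} = \mathscr{U}(\mfr{g}')\cdot\mathscr{H}(K)_{\mbf{5}\otimes\mbf{1}}$, so each such $\varphi$ has the form $Z\cdot\varphi^0$ for some $Z\in\mathscr{U}(\mfr{g}')$ and some harmonic $\varphi^0$. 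By proposition \ref{P:harmonics}, $\mathscr{H}(K)_{\mbf{5}\otimes\mbf{1}}$ consists of traceless quadratic forms in $z_\alpha$, and from section \ref{SS:fock5} the Kudla-Millson form induces a $K$-isomorphism from $(\wedge^{1,1}\mfr{p})_{\mbf{5}\otimes\mbf{1}}$ onto precisely this space. Therefore $\varphi^0 = \kmphi(v)$ for a unique $v$ in the source.

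Finally, I would apply lemma \ref{L:exchange} to transfer the operator $Z$ from the Schwartz side to the cusp form side, replacing each summand by $\theta^{x,L}_\kmphi(Z^*f)$ with $Z^*f$ again in $\sigma$ (using that $\sigma$ is a $(\mfr{g}',K')$-module). This rewrites $\xi_0$ as a finite sum of $K$-morphisms $\theta^{x,L}_\kmphi(g_j)$ from the irreducible $K$-module $(\wedge^{1,1}\mfr{p})_{\mbf{5}\otimes\mbf{1}}$ into $[\theta(\sigma)]_{\mbf{5}\otimes\mbf{1}}$. Since $\xi_0$ is itself such an intertwiner between irreducibles, Schur's lemma forces $\xi_0 = \sum c_j \theta^{x,L}_\kmphi(g_j)$ for some $c_j \in \C$, yielding the desired expression. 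The main technical obstacle I anticipate is verifying the compatibility of the abstract Vogan-Zuckerman minimal-$K$-type factorization with the explicit Fock-model realization of $\theta(\sigma)$, so that the isotypic projection really is generated under $\mathscr{U}(\mfr{g}')$ by $\kmphi$-harmonics and the exchange of $Z$ across the theta integral stays within the cuspidal space $\sigma$ throughout the manipulation.
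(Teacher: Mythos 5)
Your proposal follows the paper's own proof essentially step for step: Weissauer's theorem to place $\xi$ in $\mr{H}^{1,1}(\mfr{g},K;\theta(\sigma))$, Vogan--Zuckerman to factor through the multiplicity-one $\mbf{5}\otimes\mbf{1}$ isotypic component, Howe's theorem \ref{T:howe} plus proposition \ref{P:harmonics} to write each Schwartz datum as $Z\cdot\kmphi(v)$, lemma \ref{L:exchange} to move $Z^*$ onto the cusp form, and Schur's lemma to conclude. This matches the argument in section \ref{SS:fock7}, including the technical caveats you flag at the end.
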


\section{Main Theorem}
\label{S:main}
We can now show that the Picard groups of Siegel modular threefolds 
are generated by special cycles by combining proposition
\ref{P:main1} with theorem \ref{T:KM2}. In the notations there, 
we have $p = 3$, $q=2$, $m = p+q = 5$, $n =1$, so that we are
in the range $n < m/4$. In this case, the special 
theta lifting is from the holomorphic cusp forms 
of weight $5/2$ for $\tilde{\mr{SL}}_2 (\Q)$ to 
harmonic $(1, 1)$-forms on the manifold $X_{\Gamma}$.

\begin{thm}
\label{T:main}
For any subgroup of finite index $\Gamma \subset \mr{Sp}_4 (\Z)$, $\mr{Pic} (X_{\Gamma})\otimes \Q$ is spanned by the 
classes of the special cycles. 
\end{thm}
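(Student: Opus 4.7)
The plan is to glue together the three principal results already assembled in the paper: Weissauer's orthogonal decomposition of the Picard group (Theorem \ref{T:weiss3}), the identification of arbitrary $(1,1)$-classes with classes coming from the Kudla--Millson Schwartz form $\kmphi$ (Proposition \ref{P:main1}), and the Kudla--Millson/Tong--Wang identification of such $\kmphi$-theta lifts with Poincar\'e duals of geometric special cycles (Theorem \ref{T:KM2}). Since special cycles are defined over a number field and the subspace they span in $\mr{Pic}(X_{\Gamma})\otimes\Q$ extends scalars compatibly, it suffices to establish that their images span $\mr{Pic}(X_{\Gamma})\otimes\C$.

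Using the Weissauer decomposition
\[
\mr{Pic}(X_{\Gamma})\otimes\C \;=\; \C\cdot[\mc{L}]\;\oplus\;\mr{Pic}(X_{\Gamma})_{0},
\]
I would handle the two summands separately. For the Lefschetz part, I would exhibit $[\mc{L}]$ itself as a rational linear combination of Humbert classes. Classically, sufficiently many Humbert surfaces have known intersection numbers against $[\mc{L}]$, and one can write $[\mc{L}]$ as a suitable average of such divisors; alternatively one can use the fact that any ample special cycle has class proportional to $[\mc{L}]$ modulo $\mr{Pic}(X_{\Gamma})_{0}$, so adjusting by an element of the complement (which is already handled below) recovers $[\mc{L}]$ from a single special divisor.

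For the non-Lefschetz complement $\mr{Pic}(X_{\Gamma})_{0}$, Proposition \ref{P:main1} already shows that every class $\xi$ in this summand has a representative of the form
\[
\xi \;=\; \sum_{j} c_{j}\,\theta^{x_{j},L_{j}}_{\kmphi}(g_{j}),
\qquad g_{j}\in\sigma_{j},\ c_{j}\in\C,
\]
where each $\sigma_{j}$ is an irreducible subspace of holomorphic cusp forms of weight $5/2$ on $\tilde{\mr{SL}}_{2}(\R)$, and $\kmphi$ is the Kudla--Millson Schwartz form. Since $n=1$ and $m=p+q=5$, the pair $(p,q)=(3,2)$ lies in the stable range $n<m/4$; Theorem \ref{T:KM2} then asserts that each term $\theta^{x_{j},L_{j}}_{\kmphi}(g_{j})$ is, up to normalization, the Poincar\'e dual of an algebraic special cycle on $X_{\Gamma}$, i.e.\ a finite $\C$-linear combination of Hecke translates of classical Humbert surfaces. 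Thus every class in $\mr{Pic}(X_{\Gamma})_{0}$ is in the span of special cycles, completing the spanning statement after combining with the Lefschetz case.

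The main obstacle I anticipate is the bookkeeping required for the passage from the adelic formulation (where $\theta(\sigma)$ and $\kmphi$ are naturally defined) back to the classical quotient $X_{\Gamma}$, and in particular descending the spanning statement from $\C$-coefficients to $\Q$-coefficients. The Galois-theoretic reduction in Section \ref{S:adelic} (especially Lemma \ref{L:adelic1}) is the natural tool: it reduces the general statement to the trivial idele class character and a single connected component $M_{N}^{0}$, where the Kudla--Millson/Tong--Wang cycles are genuinely rational. A minor but annoying technical check is that the linear combination produced by Proposition \ref{P:main1} yields an \emph{algebraic} cycle rather than merely a harmonic $(1,1)$-form; this is where the special structure of $\kmphi$ (as opposed to an arbitrary Schwartz function) is essential, via Theorem \ref{T:KM2}.
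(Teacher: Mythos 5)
Your treatment of the complement $\mr{Pic}(X_{\Gamma})_{0}$ is exactly the paper's argument: Proposition \ref{P:main1} reduces everything to the special lifts $\theta^{x,L}_{\kmphi}(f)$, and Theorem \ref{T:KM2} (applicable because $(p,q)=(3,2)$, $n=1$ puts us in the stable range $n<m/4$) identifies the span of those lifts with the span of the special cycles; the descent from $\C$- to $\Q$-coefficients is indeed the triviality you state, since the special cycles already live in $\mr{Pic}(X_{\Gamma})\otimes\Q$ and spanning is preserved under extension of scalars.

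The one place your proposal is not yet a proof is the Lefschetz class. Both of your suggested arguments assert, rather than prove, that some Humbert divisor has nonzero component along $[\mc{L}]$ in the canonical decomposition $\mr{Pic}(X_{\Gamma})\otimes\C=\C\cdot[\mc{L}]\oplus\mr{Pic}(X_{\Gamma})_{0}$: writing a Humbert class as $c[\mc{L}]+\xi_{0}$ and inverting only works if $c\neq 0$, and ``known intersection numbers against $[\mc{L}]$'' is not an argument on the non-compact $X_{\Gamma}$, where pairing against powers of the polarization requires passing to a compactification and controlling boundary contributions. The paper closes this gap by citing an explicit relation of Yamazaki on the toroidal Igusa compactification of $X_{\Gamma(N)}$, namely $10\,\eta=2[E]+N[D]$, where $E$ is the sum of the Humbert surfaces of discriminant $1$ and $D$ is the boundary divisor; restricting to the open variety kills $[D]$ and exhibits $\eta$ directly as $\tfrac{1}{5}[E]$, a rational combination of special cycles. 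You should either quote such an explicit divisor relation or supply a genuine positivity argument (degree of an effective divisor against an ample class on the compactification, with the boundary components accounted for) to guarantee $c\neq 0$; without one of these, the Lefschetz summand is not handled.
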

\begin{proof}
We already know that 
$\mr{Pic} (X_{\Gamma})\otimes \C =  \mr{H}^{1, 1} (X_{\Gamma}, \ \C)$. 
We also know that 
$\mr{H}^{1, 1} (X_{\Gamma}, \ \C)
= \C \cdot\eta \oplus  \mr{H}^{1, 1} (X_{\Gamma})_0$, 
where $\eta$ is the Lefschetz class and 
$\mr{H}^{1, 1} (X_{\Gamma}, \ \C)_0$ is its canonical 
complement.  In fact, $\eta$ is in the span of the 
Humbert surfaces; this follows from Yamazaki's formula, 
\cite[Lemma 7]{tY}
\[
10 \eta  = 2[E] + N [D]
\]
for the principal congruence subgroup $\Gamma (N)$. Here $E$ is the divisor
which is the sum of the Humbert surfaces of discriminant 1, and 
$D$ is the sum of the boundary components. This formula holds on the
toroidal Igusa compactification of $X_{N}$. Since 
$X_N$ is the complement of the divisor $D$, this 
shows that a multiple of $\eta$ is in the span of the 
special cycles. 
   Thus it is enough to see that any class in the canonical
complement  $\mr{Pic} (X_{\Gamma})_0\otimes \Q$ is in the 
span of the special cycles. Evidently the special cycles
generate a subvector space. We know that 
$\mr{Pic} (X_{\Gamma})_0\otimes \C  =  \mr{H}^{1, 1} (X_{\Gamma})_0$ and by proposition \ref{P:main1}
these are generated by the special theta lifts
of Kudla-Millson $\theta ^{x, L} _{\kmphi} (f)$ 
for various lattices $L$. According to theorem 
\ref{T:KM2} these are in the span of special cycles. 
\end{proof}

\section{Special cycles}
\label{S:special}
\subsection{}
\label{SS:special1}
The theory of
special cycles has its origin in the discovery, by Hirzebruch and Zagier,
of special curves on Hilbert modular
surfaces and the connection of these with modular forms:
the intersection numbers of these special curves appear as Fourier
coefficients of modular forms.
This was vastly generalized by two groups:
Kudla and Millson,
\cite{KM1}, \cite{KM2}, \cite{KM3}, \cite{KM4}; and Tong and Wang, \cite{TY1},
 \cite{spW1}, \cite{spW2}. The symmetric
spaces in question are those associated to one of three classes
of groups: $O(p, q)$, $U(p, q)$ and $Sp(p, q)$. 
\subsection{}
\label{SS:special2}
We will follow the notations of the papers of Kudla-Millson. We explain
their results only in the orthogonal case.
\begin{alignat*}{2}
G &= O(p, q),   &\  &V = \mr{\ the\  standard \ module\ for \ } G\\
D &\cong \mr{SO}_0 (p, q)/(  \mr{SO} (p)\times \mr{SO} (q)), \ &
&\mr{the \ symmetric \ space \ for \ }G\\
(\ , \ )&: V \times V \to \R&\ &\mr{a\  symmetric \ bilinear\ form,\
             sgn } = (p, q) \\
L &\subset V & \ & \mr{a\ }\Z-\mr{lattice\ with \ }(L , L)\subset  \Z\\
\Gamma &\subset G &&
                     \mr{torsion\ free\ congruence\ subgroup}\\
G' &= \mr{Sp}(2n, \ \R )&\ \tilde{G}' &= \mr{Mp}(2n, \ \R )
\end{alignat*}
We assume that $\Gamma$ preserves the lattice $L$.
We can assume $p\ge q$. Here $n$ is an integer with
$1\le n \le p$.
\[
G_0 = \mr{SO}_0 (p, q) \mr{\ is\  the\  connected\
component \ of \ the \ identity\  in \ }G.
\]
It is also the set of elements
of spinorial norm 1. We can identify the symmetric space
$D$ with $\mr{Gr}^{-}_q (V)$, the subspace of the Grassmannian of $q$-planes
$Z$ such that $Z \mid (\ , \ )$ is negative definite.
Let $V_{\Q} = L \otimes _{\Z} \Q$. Define
$X_{\Gamma } = \Gamma \backslash D$, a smooth manifold of dimension
$a = pq$.
In case $q = 2$ this carries a canonical complex structure, and is
the set of $\C$-points of a Shimura variety; these are studied in
detail in \cite{K} in the anisotropic case.
The relevant case for us is 
when $(p, q) = (3, 2)$. Then there is an isogeny 
$\mr{Sp}_4 (\R) \sim \mr{SO}_0 (3, 2)$ and $D$ is 
isomorphic with the Siegel space of genus 2.
See sections \ref{S:iso}, and \ref{S:sym} for the dictionary to go between 
the symplectic and orthogonal viewpoints.

\subsection{}
\label{SS:special3}
We are now going to define certain cycles on $X_{\Gamma}$.
Let $U_{\Q} \subset V$ be an oriented subspace such that
$ (\ , \ )\mid U$ is nondegenerate. Here we use the convention
that suppression of an index such as $\Q$ means the $\R$-span
of the corresponding object; here $U = U_{\Q}\otimes _{\Q} \R$.

Then we have a
decomposition $V =  U \oplus U^{\perp} $.
We define
\[
D_U = \{Z \in D : Z = Z \cap  U +Z \cap  U^{\perp}
       \} \subset D
\]
and let $G_U$ be the stabilizer of $U$ in $G$ and
$G_{U}^0$ the connected component of the identity. Put
$\Gamma _U =  \Gamma \cap G_U$ and  $\Gamma _{U}^0 =  \Gamma \cap G_{U}^0$.
Define $C_U =\Gamma _{U}^0\backslash D_U $. The natural map
$\pi: C_U \to \Gamma \backslash D $ is proper, and thus
the pair $(C_U, \pi )$ is a locally finite singular cycle in $X_{\Gamma}$.
This will be orientable if $\Gamma$ is a subgroup of
$\mr{SO}_0 (p,q)$, which will always be the case in our examples,
since the $\Gamma$ we work with will be the images of corresponding
subgroups of $\mr{Sp} (4, \R )$, which is the spin covering of
$\mr{SO}_0 (3, 2)$.
\subsection{}
\label{SS:special4}
Given subspace $U$ as above we define an involution
$\tau = \tau _U$ via
\[
\tau =
\begin{cases}
+1 \mr {\ on\ } U\\
-1 \mr {\ on\ } U^{\perp}
\end{cases}.
\]
Then $G_U$ is the centralizer of $\tau$ and
$D_U = \{Z : \tau Z = Z  \}$. If $ (\ , \ )\mid U$ has signature
$(r, s)$ then $G_U  \cong \mr{O}(r, s) \times \mr{O}(p-r, q-s)$
and $D_U$ has codimension $p s + q r - 2 rs$ in $D$. Most important
for us is the case where the plane $U$ is positive definite.
In these cases
\[
D_U = \{Z \in D : Z \subset U^{\perp}
       \}
\]
and clearly
$n = \dim U \le p$. In general, when $q=2$, the cycles $C_U$ are algebraic cycles if
$U$ is positive definite; if $U$ is not positive-definite,
they are totally geodesic (singular) submanifolds
of $D$. From now on, we only consider positive definite $q$-planes.
\subsection{}
\label{SS:special5}
In our case, $(p, q)= (3, 2)$, and via the isomorphism 
$D \cong \mfr{H}_2$, 
$D_U$ is an embedded copy of
$\Sg{1}\times \Sg{1}$, resp.  $\Sg{1}$, resp. a point, 
corresponding to  $\dim U = 1$, resp. 2, resp. 3.
When  $\dim U = 1$, $D_U$ is a Humbert surface (see section \ref{S:sym}).
For a good discussion of Humbert surfaces, see \cite[Ch. IX]{vdG}. 

\subsection{}
\label{SS:special6}
We now define certain linear combinations of the cycles
$C_U$. Let $n$ be an integer in the range 1 to $p$. Let
$X = \{x_1, ..., x_n \}\subset V^n$; such an $X$ is called
an $n$-frame. Let
$(X, X)$ be the symmetric matrix given by
$(X, X)_{ij} = (x_i , x_j )$. We consider the orbit
$\mc{O} = G \cdot  X$ in $V^n$. We call the orbit {\it nonsingular} if
$\mr{rank} (X, X)= n$ and {\it nondegenerate} if
$\mr{rank} (X, X) = \dim _{\R} \mr{span} X$. The zero orbit is
nondegenerate but singular. It is known that the orbit
$\mc{O}$ is closed if and only if it is nondegenerate. If
$\mc{O}$ is a closed orbit, then by a theorem of Borel,
$\mc{O} \cap L^n$ consists of a finite number of $\Gamma$-orbits.
Letting ${Y_1 , ..., Y_l}$ be a set of representatives of these
orbits and $U_j = \mr{span} Y_j$ we define
\[
C_{\mc{O}} = \sum _{j=1}^{l} C_{U_{j}}.
\]
Given any symmetric $n\times n$ matrix $\beta$, define
\[
\mc{Q} _{\beta} = \{X \in  V^n : (X, X) = \beta \}.
\]
If $X \in \mc{Q} _{\beta}$ then $\mc{O} = G \cdot X \subset \mc{Q} _{\beta}$.
In case $\beta $ is positive definite, $ G$ acts transitively
on $\mc{Q} _{\beta}$, thus $\mc{O} = \mc{Q} _{\beta}$ and we may
write $C_{\beta }$ for $C_{\mc{O}}$. If $\beta$ is positive semidefinite
of rank $t < n$,   $\mc{Q} _{\beta}$ contains a unique closed orbit
defined by
\[
\mc{Q} _{\beta}^c = \{X \in  \mc{Q} _{\beta} :
              \dim \mr{span} X = t \}
\]
and it is known that $G$ acts transitively on $\mc{Q} _{\beta}^c$.
If $\beta $ is positive semidefinite, we let $C_{\beta}$ denote
$C_{\mc{O}}$ for this unique closed orbit.

In this paper, $(p, q) = (3, 2)$, $n = 1$, so that $\beta$ is a 
positive rational number. Classically one referred to $\beta$ as
the discriminant of the Humbert surface.
\subsection{}
\label{SS:special7}
As Kudla-Millson observe, these cycles are often zero for trivial
reasons. By their conventions on orientations the
frames $(x_1 , ..., x_n)$ and $(-x_1, x_2, ...x_n)$ could both occur
and give cancelling contributions. Therefore they fix
a $h \in L^n$ and an integer $N \ge 1$ such that
$\gamma \in \Gamma $ implies $\gamma \equiv 1$ mod $N$,
and replace $\mc{Q}_{\beta} \cap L^n$ in the above definition with
$\mc{Q}_{\beta} \cap (h +N L^n)$. Taking $\Gamma$-orbit representatives
in this set then defines cycles $C_{\beta, h, N}$ as above. Note that
Kudla-Millson simplify this notation to $C_{\beta}$ in all their subsequent
work.

\section{Theta correspondence }
\label{S:theta}
\subsection{}
\label{SS:theta1}
Given a dual reductive pair
$(G, G')$ in the sense of Howe
\cite{rH}, the theta correspondence is a mapping between automorphic
forms on $G$ and automorphic forms on $G'$, and conversely.
With $(G, G')= (\mr{O}(V), \mr{Sp}(2n, \R ))$ the general set-up
is as follows: Let $ \mc{S} (V^n )$ be the Schwartz space of
$C^{\infty} $ complex-valued functions
all of whose derivatives decrease rapidly to $0$ at infinity.
This carries a canonical action $\omega$ of
$G \times \tilde{G}'$, where $\tilde{G}' \to G'$ is the metaplectic
double cover. The first factor acts via:
\[
\omega (g) \varphi (X) = \alpha (g)\varphi (g^{-1}X).
\]
where $\alpha: G \to \C^*$ is the $n$th power of spinor norm.
The second factor acts via the Weil, or oscillator, representation.
Given any $\Gamma \times \tilde{\Gamma} '$-invariant distribution
($\tilde{\Gamma}'$ the inverse image of $\Gamma'$ in $\tilde{G}'$)
$\Theta :  \mc{S} (V^n )\to \C$, and any
$\varphi \in  \mc{S} (V^n )$ one can form the kernel
$\theta _{\varphi} (g, g') =\Theta(\omega (g)\omega (g')\varphi) $
then one defines
\begin{align*}
\theta _{\varphi}(f)(g') &= \int _{\Gamma \backslash G}
           f(g) \theta _{\varphi} (g, g')dg \\
\theta _{\varphi}(f')(g) &= \int _{\tilde{\Gamma} ' \backslash \tilde{G}'}
            f'(g')\theta _{\varphi} (g, g')dg'.
\end{align*}
The most important case for us will be the distributions
given by summing over a lattice in $V^n$. For instance, 
we can define, for $h \in L^n$, $N \in \Z$, 
\[
\Theta _{h, N} (\varphi )  =    \sum _{\substack{
                                 X \in L^n\\
                                 X \equiv h \mr{\ mod\ } N
                                  }} \varphi (X)
\]
From now on we assume our distribution has this form.
If $f$ (resp. $f'$) is a cusp form on $G$ (resp. $G'$) then
$\theta _{\varphi}(f)$ (resp. $\theta _{\varphi}(f')$ will be an
automorphic form on $G'$ (resp. $G$) provided that $\varphi$
is $K \times \tilde{K}'$ -finite where $K$ (resp. $\tilde{K}'$ is a maximal
compact subgroup of $G$ (resp. $G'$). 

\subsection{}
\label{SS:theta2}
In practice, $\varphi$ will transform
according to specific representations $\sigma$, $\sigma '$ of
$K$,  $\tilde{K}'$. These representations define homogeneous vector
bundles $E_{\sigma}$ (resp.$E_{\sigma'}$) on the symmetric space
$D$ (resp. $\Sg{n}$), and we may interpret $\theta _{\varphi}$ as
defining linear operators between spaces of sections:
\[
\Gamma (X_{\Gamma}, E_{\sigma}) \to
\Gamma (\Gamma  ' \backslash \Sg{n},  E_{\sigma'}),\qquad
\Gamma (\Gamma  ' \backslash \Sg{n},  E_{\sigma'})\to
\Gamma (X_{\Gamma}, E_{\sigma})
\]
The crucial case for us is when $\sigma $ defines the bundle
of differential forms of degree $nq$ on $D$ and $\sigma '$ defines
the line bundle $\mc{L}_m$ whose holomorphic sections are the Siegel cusp forms of
weight $m/2 = (p+q)/2$. 
It is a nontrivial fact that there exists a kernel 
$\theta _{\boldsymbol{\varphi} ^+}$ which gives rise to a linear map
\[
\Lambda : S_{m /2} (\Gamma ') \to \mbf{H}^{nq} (M_{\Gamma })
\]
where the left side above is the space of holomorphic cusp
forms of weight $m/2$ on a congruence subgroup
$\Gamma ' \subset \tilde{G}' = \mr{Mp}(2n, \R) $, and the right hand side
is the space of closed harmonic $nq$ forms on $X_{\Gamma }$. The element
$\boldsymbol{\varphi} ^+$ is then a Schwartz function with values in 
differential forms on $D$.

\subsection{}
\label{SS:theta3}
Kudla and Millson present this construction in the following way.
Let 
\[
\theta : \rm{H}_{ct} ^* (G, \mc{S } (V^n )) \to \coh{\ast}{X_{\Gamma }}{\C}
\]
where the left hand side above is continuous cohomology, be the
composite
\[
\begin{CD}
\rm{H}_{ct} ^* (G, \mc{S } (V^n )) @>\mr{restriction}>>
\rm{H} ^* (\Gamma , \mc{S } (V^n ))
@>\Theta >> \rm{H} ^* (\Gamma , \C) = \coh{\ast}{X_{\Gamma }}{\C} .
\end{CD}
\]
Here $\Theta : \mc{S } (V^n ) \to \C $ is any $\Gamma$-invariant distribution. By the van Est theorem, the continuous cohomology
is computed from the complex of $G$-invariant differential
forms
\[
\mr{C}^i = (A ^i (D) \otimes \mc{S}(V^n))^{G_0}
\]
so that we can identify a continuous cohomology class
with the class $[\varphi ]$ of a closed differential form
$\varphi$ on $D$ with
values in $\mc{S}(V^n)$.
Another way of understanding $\theta $ is the following:
If $\Theta$ is a $\Gamma $ -invariant distribution then
$g \mapsto \theta (\omega (g) \varphi )$ is in
$C^{\infty}(\Gamma \backslash G)$, for any
$[\varphi] \in  \rm{H}_{ct} ^* (G, \mc{S } (V^n ))$. Utilizing
well-known isomorphisms (see \cite{BW}), $\theta$ is the composite
\[
\begin{CD}
 \rm{H}_{ct} ^* (G, \mc{S } (V^n )) =
 \rm{H} ^* (\mfr{g}, K , \mc{S } (V^n ))
@>\varphi \mapsto \theta (\omega (g) \varphi ) >>
\rm{H} ^* (\mfr{g}, K ,C^{\infty}(\Gamma \backslash G) ) =
 \coh{\ast}{X_{\Gamma }}{\C}
\end{CD}
\]
Kudla and Millson construct a pairing
\[
((\ ,\ )) : \rm{H} ^{i}_{c} (X_{\Gamma}  ,\C ) \times
\rm{H}_{ct} ^{a-i} (G, \mc{S } (V^n ))\longrightarrow
C^{\infty}( \tilde{G}'), \qquad
a = \dim X_{\Gamma}
\]
via
\[
\theta _{\varphi} (\eta)(g') := ((\eta  ,\varphi ))(g') =
\int _{X_{\Gamma}} \eta \wedge \theta ((\omega (g') \varphi)
\]
where $\eta$ is represented by a closed compactly supported $i$-form
and $ \theta ((\omega (g') \varphi)$ by a closed $(a-i)$-form. One of the main
results of \cite{KM4} is that, with suitable restriction on
$\varphi$, the image of this is in the holomorphic sections in
$\Gamma (\mc{L}_m)$ the space
of Siegel modular forms of weight $m/2$ on $ \tilde{G}'$. The relevant
$\varphi$ define classes in an space they denote
\[
\rm{H}_{ct} ^{a-i} (G, \mc{S } (V^n ))^{\mfr{q}}_{\chi _{m}}
\]
whose precise definition can be found in the introduction of
\cite{KM4}. Thus they obtain a pairing:
\[
((\ ,\ )) : \rm{H} ^{i}_{c} (M_{\Gamma}  ,\C ) \times
\rm{H}_{ct} ^{a-i} (G, \mc{S } (V ))^{\mfr{q}}_{\chi _{m}}
\longrightarrow
\Gamma (\mc{L}_m).
\]
Kudla and Millson construct a canonical
element
$\boldsymbol{\varphi} ^+ = 
\boldsymbol{\varphi} ^{+}_{nq} \in
\rm{H}_{ct} ^{nq} (G, \mc{S } (V^n ))^{\mfr{q}}_{\chi _{m}}$
such that, for any  $\eta \in\rm{H} ^{i}_{c} (X_{\Gamma}  ,\C )  $,
the Fourier coefficients of the Siegel modular form 
$\theta _{\boldsymbol{\varphi}} (\eta) (\tau) $ are essentially given by the 
periods of $\eta$ over the special cycles. 
Recall that the Fourier expansion is given by
($\tau = u +iv$) 
\[
\theta _{\boldsymbol{\varphi} ^+}(u + i v) = \sum _{\beta \in \mathscr{L}}
a_{\beta }(v) \, \mr{exp}(2 \pi i\mr{Tr}\, (\beta u))
\]
the sum ranging over a lattice $\mathscr{L}$ in the space of symmetric 
matrices of size $n$ with $\Q$-coefficients.

They prove:
\begin{thm}
\label{T:KM4I}
\begin{itemize}
\item[(i)]
The induced pairing
\[
((\ ,\ )) : \rm{H} ^{i}_{c} (X_{\Gamma}  ,\C ) \times
\rm{H}_{ct} ^{a-i} (G, \mc{S } (V^n )))^{\mfr{q}}_{\chi _{m}}
\longrightarrow
\Gamma (\mc{L}_m)
\]
takes values in the holomorphic sections.
\item[(ii)]
If $\eta \in\rm{H} ^{i}_{c} (X_{\Gamma}  ,\C )  $ and
$\varphi \in \rm{H}_{ct} ^{a-i} (G, \mc{S } (V^n ))^{\mfr{q}}_{\chi _{m}}$
then all the Fourier coefficients $a_{\beta}$ of
$\theta _{\varphi} (\eta)(g')$ are zero except the positive semi-definite
ones. Suppose further that $\varphi $ takes values in $\mbf{S}(V^n)$,
the polynomial Fock space (see \cite[Intro.]{KM4} for the definition)
then these Fourier coefficients are expressible 
in terms of periods over the special cycles $C_{\beta}$. For the 
canonical class $\boldsymbol{\varphi} ^+ 
= \boldsymbol{\varphi} ^{+}_{nq} $, $i = nq$, and for positive definite 
$\beta$ one has
\[
a_{\beta }(\theta _{\boldsymbol{\varphi}^+} (\eta )) (v)
= \mr{e} ^{- 2 \pi \mr{Tr}(\beta v)}\int _{C_{\beta}} \eta
\]
\end{itemize}
\end{thm}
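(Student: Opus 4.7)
The plan is to prove the two parts of Theorem~\ref{T:KM4I} by separately treating the representation-theoretic statement on the symplectic side (holomorphicity, weight, and support of the Fourier expansion) and the geometric statement on the orthogonal side (identification of Fourier coefficients with periods over special cycles). The fundamental object throughout is the explicit kernel $\omega(g')\boldsymbol{\varphi}^+$, viewed as an $\mc{S}(V^n)$-valued differential form on $D$.

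For part (i), I translate holomorphicity of $\theta_{\varphi}(\eta)(g')$ as a function on $\Sg{n}$ into a statement about the action of the $(-1,1)$-summand $\mfr{p}'^{-} \subset \mfr{sp}_{2n}(\R)\otimes \C$ on $\varphi$ via the oscillator representation. The condition $\varphi \in \rm{H}_{ct}^{a-i}(G, \mc{S}(V^n))^{\mfr{q}}_{\chi_m}$ encodes precisely that the cohomology class of $\varphi$ on the $D$-side is killed by $\omega(\mfr{p}'^{-})$ and is a $\chi_m$-eigenvector for $\tilde{K}'$. Applying $\bar\partial_{\tau}$ to $\int_{X_{\Gamma}}\eta\wedge \theta(\omega(g')\varphi)$ passes through the integral and becomes, up to scalars, the action of $\omega(\mfr{p}'^{-})$ on $\varphi$; the resulting $d$-exact forms on $D$ integrate to zero against the closed, compactly supported $\eta$ by Stokes' theorem. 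The weight $m/2$ claim follows from the $\chi_m$-equivariance together with the normalization $\alpha(g)^n = 1$ on the identity component.

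For part (ii), I expand the theta kernel as a sum over $X \in h + NL^n$ and group the summands by $\beta = \tfrac{1}{2}(X,X)$, which produces the Fourier expansion in the variable $u = \mr{Re}\,\tau$. The Gaussian decay of the Fock-model representatives of $\boldsymbol{\varphi}^+$ ensures absolute convergence, and combined with the holomorphicity from (i) it forces $a_{\beta}(v) = 0$ unless $\beta$ is positive semi-definite: any negative direction would produce an $e^{+2\pi \mr{Tr}(|\beta|v)}$ factor incompatible with a holomorphic function of bounded vertical growth (Koecher-type estimate on the kernel). For $\beta$ positive definite and $\varphi = \boldsymbol{\varphi}^+$, I decompose $\mc{Q}_{\beta}\cap (h+NL^n) = \bigsqcup_{j}\Gamma\cdot Y_j$, so that the $\beta$-th coefficient of $\theta_{\boldsymbol{\varphi}^+}(\eta)(\tau)$ takes the form
\[
e^{-2\pi \mr{Tr}(\beta v)}\sum_{j}\int_{X_{\Gamma}}\eta\wedge\sum_{\gamma\in\Gamma/\Gamma_{U_j}^{0}}\gamma^{*}\boldsymbol{\varphi}^+(Y_j),
\]
the exponential being the Gaussian factor extracted from $\omega(g'_{\tau})\boldsymbol{\varphi}^+(Y_j)$. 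Unfolding the inner sum converts each term into $\int_{\Gamma_{U_j}^{0}\backslash D}\eta\wedge \boldsymbol{\varphi}^+(Y_j)$. The central property of $\boldsymbol{\varphi}^+$, due to Kudla-Millson, is that $\boldsymbol{\varphi}^+(Y_j)$ represents the Thom class (Poincar\'e dual current) of the totally geodesic submanifold $D_{U_j}\subset D$; hence this integral equals the period of $\eta$ over $C_{U_j}$, and summing over $j$ yields $\int_{C_{\beta}}\eta$.

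The main obstacle is establishing the Thom-form property of $\boldsymbol{\varphi}^+$ at positive-definite frames. This is the analytic heart of the Kudla-Millson construction: using the explicit Fock-model formulas for $\omega(\mfr{g})$ on $\mc{P}(\C^{p+q})$ recalled in Section~\ref{S:fock}, one must verify that $\omega(g)\boldsymbol{\varphi}^+(X)$ is a closed, rapidly decaying differential form on $D$ whose cohomology class is Poincar\'e dual to $D_{\mr{span}(X)}$, with precisely the Gaussian scaling $e^{-\pi \mr{Tr}((X,X)v)}$ under the Weil action of $g'_{\tau}$. Once that identity is in hand, the orbit decomposition and unfolding in part (ii) are formal, and part (i) emerges as the soft representation-theoretic shadow of the same structural property of $\boldsymbol{\varphi}^+$.
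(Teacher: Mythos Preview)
The paper does not prove Theorem~\ref{T:KM4I}; it is quoted verbatim as ``one of the main results of \cite{KM4}'' and is used as a black box in the proof of the main theorem. So there is no ``paper's own proof'' to compare against.

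That said, your sketch does follow the architecture of the original Kudla--Millson argument: the holomorphicity in (i) comes from the $\mfr{p}'^{-}$-annihilation built into the definition of $\rm{H}_{ct}^{\bullet}(G,\mc{S}(V^n))^{\mfr{q}}_{\chi_m}$, and the Fourier-coefficient formula in (ii) comes from grouping the lattice sum by $\beta=(X,X)$, decomposing into $\Gamma$-orbits, unfolding, and invoking the Thom-form property of $\boldsymbol{\varphi}^+$. You are right that the last point is the analytic heart, and that it is proved in \cite{KM1}, \cite{KM2}, \cite{KM4} rather than here. One caution: your argument for the vanishing of the $\beta$ not positive semi-definite is too soft as stated --- in Kudla--Millson this vanishing for general $\varphi$ in the indicated cohomology space uses the structure of the Fock model and the specific $\mfr{q}$-condition, not merely a growth estimate on the Gaussian; a pure Koecher-type bound would only apply once you already know holomorphicity \emph{and} moderate growth, and the latter is part of what must be checked. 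But for the purposes of this paper the theorem is simply cited, so a full reconstruction is not required.
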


\subsection{}
\label{SS:theta4}
Recall the pairing
\[
((\ ,\ )) : \rm{H} ^{i}_{c} (M_{\Gamma}  ,\C ) \times
\rm{H}_{ct} ^{a-i} (G, \mc{S } (V ))^{\mfr{q}}_{\chi _{m}}
\longrightarrow
\Gamma (\mc{L}_m).
\]
The line bundle $\mc{L}_m$ is on the space $\Gamma ' \backslash \mfr{H}_n$
for some congruence subgroup $\Gamma '$. The holomorphic sections
of this bundle is the space  $S_{m/2}(\Gamma ')$ of Siegel 
cusp forms of weight $m/2$.
Let $f \in S_{m/2}(\Gamma ')$ be such a cusp form.
For any $\varphi \in
\rm{H}_{ct} ^{nq} (G, \mc{S } (V ))^{\mfr{q}}_{\chi _{m}}$
we get a linear functional
\[
\eta \mapsto
\int _{\Gamma ' \backslash G'}\theta _{\varphi}(\eta )(g')
\overline{f}(g') dg':
\mr{H}_c ^{(p-n)q} (X_{\Gamma},\, \C )\to \C ,\qquad
\theta _{\varphi}(\eta ) := ((\eta , \, \varphi ))
\]
which is essentially the Petersson inner product. 
By the perfect pairing given by Poincar\'e duality,
\[
\mr{H}^{nq} ( M_{\Gamma}) \times
\mr{H}^{(p-n)q} _c( M_{\Gamma})\longrightarrow \C
\]
this linear form is identified with a class
$\theta _{\varphi} (f )\in \coh{nq}{X_{\Gamma}}{\C}$. By construction:
\[
[\theta _{\varphi}(\eta ), f]:=
\int _{\Gamma ' \backslash G'}\theta _{\varphi}(\eta )
\overline{f} = \int _{X_{\Gamma}} \eta \wedge \theta _{\varphi} (f )
:=  ( \eta ,  \theta _{\varphi} (f )).
\]
The map $f \mapsto \mr{class \ of \ }
\theta _{\varphi} (f )$ is the theta lifting
$\Lambda _{\varphi}:  S_{m/2}(\Gamma ') \to \mr{H}^{nq}(M_{\Gamma})$.
We denote this simply by $\Lambda$ for the canonical 
$\varphi = \boldsymbol{\varphi} = \boldsymbol{\varphi} ^{+}_{nq}$.
\subsection{}
\label{SS:theta5}
Let
$\mbf{H}_{\theta}^{\perp}\subset \mr{H}_c ^{(p-n)q} (X_{\Gamma})$ be the
subspace of all classes of closed compactly supported
$(p-n)q$ forms that are orthogonal under the pairing
$[\ ,\ ]$ to the image of $\mr{S}_{m/2}(\Gamma ')$ under
$\Lambda$. Let
$\mbf{H}_{\mr{cycle}}^{\perp} \subset  \mr{H}_c ^{(p-n)q} (X_{\Gamma})$
be the space of all classes of closed compactly supported
$(p-n)q$ forms that have period $0$ over all the special cycles
$C_{\beta}$ with $\beta > 0$ (positive-definite).
Let $\mbf{H}_{\theta}, \mbf{H}_{\mr{cycle}}$ be the subspaces
of $\mr{H}^{nq} ( X_{\Gamma})$ defined by these by Poincar\'e
duality.
They prove:
\begin{thm}
\label{T:KM2}
\cite[Theorem 4.2]{KM3}
If $n < m/4$ then $\mbf{H}_{\theta} = \mbf{H}_{\mr{cycle}}$.
\end{thm}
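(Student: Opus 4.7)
The plan is to reduce the theorem to the equality of the orthogonal complements, i.e.\ to show $\mbf{H}_{\theta}^{\perp} = \mbf{H}_{\mr{cycle}}^{\perp}$ inside $\mr{H}_c^{(p-n)q}(X_{\Gamma}, \C)$; by Poincar\'e duality this is equivalent to the statement $\mbf{H}_{\theta} = \mbf{H}_{\mr{cycle}}$. The engine of the proof is Theorem~\ref{T:KM4I}(ii), which expresses the $\beta$-th Fourier coefficient of $\theta_{\boldsymbol{\varphi}^+}(\eta)$, for $\beta$ positive definite, as (a harmless exponential factor times) the period $\int_{C_{\beta}} \eta$. So the cycle-period vanishing condition and the Fourier-coefficient vanishing condition on $\theta_{\boldsymbol{\varphi}^+}(\eta)$ coincide on the positive-definite locus; the whole issue is to control the degenerate and semi-definite terms.

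The first step, which is also the main obstacle, is to establish that for every closed compactly supported $(p-n)q$-form $\eta$ the image $\theta_{\boldsymbol{\varphi}^+}(\eta)$ actually lies in the cusp-form subspace $S_{m/2}(\Gamma')$, not merely in $\Gamma(\mc{L}_m)$. Theorem~\ref{T:KM4I}(i) gives holomorphicity, and Theorem~\ref{T:KM4I}(ii) already rules out negative-definite $\beta$; the remaining issue is the vanishing of the constant terms along the proper parabolic subgroups of $G' = \mr{Sp}(2n,\R)$, equivalently the vanishing of the positive semi-definite but not positive-definite Fourier coefficients. This is precisely where the stable-range hypothesis $n < m/4$ is used: in this range the orbit-sum contributions associated to frames of rank $< n$ can be controlled, and Kudla--Millson's analysis (the cited Theorem 4.2 of \cite{KM3}) guarantees that the degenerate contributions assemble into coboundaries, so the resulting Siegel modular form is cuspidal.

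Once cuspidality is in hand, both inclusions follow formally. For $\mbf{H}_{\mr{cycle}}^{\perp} \subseteq \mbf{H}_{\theta}^{\perp}$: if $\eta$ has trivial period against every $C_{\beta}$ with $\beta > 0$, then Theorem~\ref{T:KM4I}(ii) kills every positive-definite Fourier coefficient of $\theta_{\boldsymbol{\varphi}^+}(\eta)$, and by cuspidality the whole Fourier expansion is concentrated on positive-definite $\beta$, so $\theta_{\boldsymbol{\varphi}^+}(\eta) = 0$. The adjunction identity
\[
(\eta,\, \theta_{\boldsymbol{\varphi}^+}(f)) \;=\; [\theta_{\boldsymbol{\varphi}^+}(\eta),\, f]
\]
recorded in section~\ref{SS:theta4} then gives $(\eta, \theta_{\boldsymbol{\varphi}^+}(f)) = 0$ for every $f \in S_{m/2}(\Gamma')$, i.e.\ $\eta \in \mbf{H}_{\theta}^{\perp}$.

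For the reverse inclusion $\mbf{H}_{\theta}^{\perp} \subseteq \mbf{H}_{\mr{cycle}}^{\perp}$: if $\eta \in \mbf{H}_{\theta}^{\perp}$, the same adjunction shows that the cusp form $\theta_{\boldsymbol{\varphi}^+}(\eta)$ is Petersson-orthogonal to every element of $S_{m/2}(\Gamma')$ and hence vanishes. All its Fourier coefficients are zero; restricting to positive-definite $\beta$ and applying Theorem~\ref{T:KM4I}(ii) gives $\int_{C_{\beta}} \eta = 0$ for every $\beta > 0$, so $\eta \in \mbf{H}_{\mr{cycle}}^{\perp}$. The only nontrivial ingredient beyond Theorem~\ref{T:KM4I} is the stable-range cuspidality step, and once that is granted the equality of the two spaces is a two-line consequence.
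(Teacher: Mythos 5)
The paper does not prove this statement at all: Theorem~\ref{T:KM2} is quoted verbatim from Kudla--Millson (\cite[Theorem 4.2]{KM3}), and the only surrounding text is an explanation of what it means and of the role of the auxiliary data $(h,N)$. So there is no in-paper proof to compare against; what can be said is that your outline does reproduce the architecture of Kudla--Millson's actual argument: pass to orthogonal complements, use the adjunction $(\eta,\theta_{\boldsymbol{\varphi}^+}(f)) = [\theta_{\boldsymbol{\varphi}^+}(\eta),f]$ from section~\ref{SS:theta4}, use Theorem~\ref{T:KM4I}(ii) to identify the positive-definite Fourier coefficients of $\theta_{\boldsymbol{\varphi}^+}(\eta)$ with periods over the $C_{\beta}$, and conclude both inclusions once one knows that $\theta_{\boldsymbol{\varphi}^+}(\eta)$ is a \emph{cusp} form (so that it vanishes iff its positive-definite coefficients vanish, and iff it is Petersson-orthogonal to all of $S_{m/2}(\Gamma')$). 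That formal part of your argument is correct.

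The genuine gap is exactly where you locate it, but you do not close it: the cuspidality of $\theta_{\boldsymbol{\varphi}^+}(\eta)$ for $\eta$ closed and compactly supported, in the range $n<m/4$, is asserted with the phrase that ``Kudla--Millson's analysis (the cited Theorem 4.2) guarantees'' it. That is circular -- Theorem 4.2 is the statement you are trying to prove, so it cannot be the justification for its own key lemma. In \cite{KM3} the cuspidality is a separate, substantial analytic result: it is precisely what the ``tubes and cohomology with growth conditions'' machinery of that paper is built to establish (one must control the behaviour of the theta kernel and of the noncompact special cycles near the Borel--Serre boundary, show the degenerate Fourier--Jacobi/constant terms at \emph{every} cusp of $\Gamma'$ vanish, not only the rank-deficient coefficients at the cusp at infinity, and this is where $n<m/4$ enters through convergence and nonvanishing estimates). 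Your proposal names the obstacle correctly but supplies no argument for it, so as a self-contained proof it is incomplete; as a reduction of Theorem~\ref{T:KM2} to the cuspidality statement it is fine, and it is consistent with the source the paper cites.
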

Thus by Poincar\'e duality,
in the case of finite volume but noncompact quotients
the subspace of $\mr{H}^{nq} (X_{\Gamma})$ spanned by the duals
of the special cycles coincides with the space of theta lifts.
In the above theorem the special lifting kernel 
$ \boldsymbol{\varphi}^+ $ is used. It is important to realize that 
there is also an initial choice of a theta distribution of the type 
\[
\Theta _{h, N} (\varphi )  =    \sum _{\substack{
                                 X \in L^n\\
                                 X \equiv h \mr{\ mod\ } N
                                  }} \varphi (X).
\]
This choice appears both in the definition of the kernel
$\theta _{\boldsymbol{\varphi}^+}$ and in the definition of the 
special cycles $C_{\beta}$. This distribution is invariant under the
arithmetic subgroup $\Gamma \times \Gamma '$.
The above isomorphism should more properly
be written as $\mbf{H}_{\theta} ^{h, N} =\mbf{H}_{\mr{cycle}}^{h, N}$
In our application, $(p, q) = (3, 2)$, $n = 1$, $m = p+q = 5$, so we are in this 
stable range. Eventually we take the limit over all $(h, N)$.

\section{The isogeny $\mr{Sp}(4, \R) \sim \SO{3}{2} $ }
\label{S:iso}

\subsection{}
\label{SS:iso1}
Let $V_{\Q}= \Q^4$ with standard basis $e_i,\ i = 1, ..., 4$ and
let $\Psi$ be the alternating bilinear form
\[
\langle x, \,y  \rangle  = \langle x, \,y \rangle _{\Psi}
= \phantom{}^t x \Psi y,
\mr{\ where\ }
\Psi = \begin{pmatrix}
    0 & 1_2\\
    -1_2 & 0
    \end{pmatrix}
\]
The group of symplectic similitudes is defined as:
\[
\mr{GSp}(V_{\Q},\, \Psi) = \mr{GSp}(\Psi) = \mr{GSp}(4)
= \{g \in \mat{4}{4}{\Q}\mid\transp g \Psi g = \eta(g) \Psi
 \}
\]
where $\eta(g)\in \Q ^*$, called the multiplier of $g$. The map
$g \to \eta(g)$ is a homomorphism $\eta :\mr{GSp}(V_{\Q},\, \Psi)\to \Q^* $
whose kernel is the symplectic group $\sym {V_{\Q}}{\Psi}$. We can regard
$\mr{GSp}(\Psi)$ as defining a group scheme over $\Z$, whose points in any
ring $R$, denoted $\mr{GSp}(\Psi,\, R)$ or
 $\mr{GSp}(4,\, R)$, is defined by the same formulas as above, but with matrix
entries in $R$, with $\eta (g) \in R^*$ being a unit.
The same definitions can be given for any nondegenerate alternating
bilinear form, but recall that (over a field) these are all equivalent
by a change of coordinates.

\subsection{}
\label{SS:iso2}
Let $V_{\Z}$ be the free $\Z$-module with basis 
$e_1, e_2, e_3, e_4$. We have a symmetric bilinear form

\[
b : \wdg{2} V_{\Z} \times \wdg{2} V_{\Z}\to
\wdg{4} V_{\Z}\overset {\mr{det}}{\longrightarrow}\Z
\]
where the first arrow is wedge product and the second is the isomorphism
defined by $\det (e_1 \wedge e_2\wedge  e_3 \wedge e_4 ) = 1$.
Clearly, the natural action of $\mr{GL}(4)$ on $\wdg{2} V$
preserves this quadratic form up to scalars, namely the determinant
homomorphism. The subgroup $\mr{GSp}(4)$ stabilizes the
line spanned by
$\psi = e_1 \wedge e_3 + e_2 \wedge e_4$ and thus
we have a representation on the orthogonal complement relative to
$b$:
\[
 \alpha : \mr{GSp}(4) \to \mr{GO}_0(\psi ^{\perp},\,  b\mid\psi ^{\perp} )
:=  \mr{GO}_0 (b_{ \psi}),
\]
where the group  $\mr{GO}_0 (b_{ \psi} )$ is the connected
component of the group of orthogonal similitudes. Restricting this
to the subgroup $\mr{Sp}(4)$,
one knows:
\begin{prop}
\label{P:symplorth}
$\alpha$ induces an isomorphism
\[
\alpha : \mr{Sp}(4)/\{ \pm 1\} \to
 \mr{SO}_0 (b_{ \psi})
\]
\end{prop}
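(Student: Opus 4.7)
The plan is to split the argument into three short steps: (i) confirm that $\alpha$ sends $\mr{Sp}(4)$ into the identity component $\mr{SO}_0(b_{\psi})$; (ii) compute the kernel; (iii) finish by a dimension and connectedness argument. This is the classical exceptional isogeny $B_2 = C_2$, so no deep input is needed beyond careful bookkeeping.

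First, I would verify that the image of $\mr{Sp}(4)$ under $\alpha$ lies in $\mr{SO}_0(b_{\psi})$ and not merely in $\mr{GO}_0(b_{\psi})$. A direct check (easiest first on scalar matrices $g = cI$, where $\eta(g) = c^2$ and $\wedge^2 g$ acts by $c^2$ on every basic bivector, and then extended by the general linear dependence on entries of $g$) shows that for any $g \in \mr{GSp}(4)$ one has $g \cdot \psi = \eta(g)\, \psi$ in $\wedge^2 V$. When $g \in \mr{Sp}(4)$ the multiplier is $1$, so $g$ fixes $\psi$ and preserves $\psi^{\perp}$. Moreover $\det g = 1$ for $g \in \mr{Sp}(4)$, so $g$ acts trivially on $\wedge^4 V$, and hence $g$ preserves the quadratic form $b$ itself (not just up to a scalar). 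Connectedness of $\mr{Sp}(4)$ then places $\alpha(g)$ in $\mr{SO}_0(b_{\psi})$. Incidentally, since $b(\psi,\psi) = -2 \neq 0$ and $(\wedge^2 \R^4,b)$ has signature $(3,3)$, the restriction $b_{\psi}$ has signature $(3,2)$, so the target is indeed $\mr{SO}_0(3,2)$.

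Second, I would compute $\Ker(\alpha|_{\mr{Sp}(4)})$ directly. An element $g$ in this kernel must satisfy $g e_i \wedge g e_j = e_i \wedge e_j$ for all $i \neq j$. Each such equation forces $g$ to stabilize the coordinate plane $\langle e_i, e_j\rangle$ with unit area; intersecting pairs of these planes shows $g$ preserves each coordinate line, so $g$ is diagonal, say $g = \operatorname{diag}(\lambda_1,\lambda_2,\lambda_3,\lambda_4)$. The relations $\lambda_i \lambda_j = 1$ for all $i \neq j$ then force $\lambda_i^2 = 1$ and $\lambda_i = \lambda_j$, so $g = \pm I$. Thus $\Ker \alpha = \{\pm 1\}$, and $\alpha$ factors through an injective homomorphism $\bar\alpha : \mr{Sp}(4)/\{\pm 1\} \hookrightarrow \mr{SO}_0(b_{\psi})$.

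Finally, for surjectivity I would invoke a dimension count: both groups have dimension $10$ (the right-hand side since $\dim \mr{SO}(5) = \binom{5}{2} = 10$). Because the kernel is finite, $d\alpha$ is injective on Lie algebras and, by equality of dimensions, an isomorphism; hence the image of $\alpha$ is an open subgroup of the connected group $\mr{SO}_0(b_{\psi})$, and open subgroups of connected Lie groups are everything. Thus $\bar\alpha$ is a bijective homomorphism with invertible differential, i.e.\ an isomorphism. The only real obstacle is the multiplier bookkeeping in step (i): one must track how $\eta(g)$ and $\det(g)$ interact with the $\wedge^2$ and $\wedge^4$ actions to see both that $\psi$ is fixed and that $b$ is preserved on the nose; everything else is elementary linear algebra.
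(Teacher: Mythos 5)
The paper itself offers no proof of this proposition --- it is introduced with ``one knows'' and treated as the classical isogeny of type $B_2=C_2$ --- so there is nothing internal to compare your argument against; what you have written is a correct and complete proof of the omitted fact, and the three-step structure (image lands in $\mr{SO}_0$, kernel is $\{\pm 1\}$, surjectivity by dimension plus connectedness) is the standard one. Two small points. First, your justification of $g\cdot\psi=\eta(g)\,\psi$ via scalar matrices and ``general linear dependence on the entries of $g$'' is not actually an argument, since $g\mapsto g\cdot\psi-\eta(g)\psi$ is not linear in $g$; but the identity is immediate from the paper's own dictionary $\wedge^2 V\cong\mr{Skew}(4)$ with action $R\mapsto gR\,{}^t g$ and $\psi\mapsto\Psi$: inverting ${}^t g\,\Psi\,g=\eta(g)\Psi$ gives $g\,\Psi\,{}^t g=\eta(g)\Psi$. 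Second, in the kernel computation you pass from ``$\alpha(g)=\mr{id}$ on $\psi^{\perp}$'' to ``$\wedge^2 g$ fixes every $e_i\wedge e_j$''; this needs the one-line remark that $\wedge^2 V=\R\psi\oplus\psi^{\perp}$ (as $b(\psi,\psi)=-2\neq 0$) and that $g\in\mr{Sp}(4)$ already fixes $\psi$, so triviality on $\psi^{\perp}$ forces triviality on all of $\wedge^2 V$. With those two clarifications the proof is airtight, and the rest (determinant one kills the $\wedge^4$ twist so $b$ is preserved on the nose, the diagonal analysis forcing $g=\pm I$, and the open-subgroup argument for surjectivity) is exactly right.
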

One checks that the 5-dimensional quadratic form $b_{\psi}$
has signature $(3, 2)$.

\subsection{}
\label{SS:iso3}
Let $\mr{Skew}(4, \Q)$ be the space of skew-symmetric matrices
of size $4$ with entries in $\Q$. There is a natural action 
of $\mr{GL}(4, \Q)$ on this space by $M \to g.M.\phantom{}^t g$.
Let 
\[
\Psi= \begin{pmatrix}
    0 & 1_2\\
    -1_2 & 0
    \end{pmatrix} \in \mr{Skew}(4, \Q). 
\]
The stabilizer of $\Psi$ for this action is $\mr{Sp}(4, \Q)$. 
One can check that the symmetric bilinear form on 
 $\mr{Skew}(4, \Q)$ defined by 
 \[
 b_0 (M, N) := \frac{1}{2}\mr{Tr}(M\Psi N\Psi )- 
 \frac{1}{4}\mr{Tr}(M\Psi)\mr{Tr}(N\Psi)
 \]
is invariant under all $M \to g.M.\phantom{}^t g$ for 
$g \in \mr{Sp}(4, \Q)$. It is also $\Z$-valued on 
$\mr{Skew}(4, \Z)$.
The space
\[
\Psi ^{\perp} := \{M \in \mr{Skew}(4, \Q) :
b_0(M, \Psi) = 0
              \}
\]
is $5$-dimensional and invariant under $\mr{Sp}(4, \Q)$.
We therefore obtain a morphism of algebraic groups
$\mr{Sp}(4) \to \mr O (\Psi ^{\perp})$. This necessarily lands in the 
connected component $\mr{SO}_0 (\Psi ^{\perp})$ since $\mr{Sp}(4) $
is connected. It is well-known that this is an isogeny 
with kernel $\pm 1$. The signature of the form on 
$\Psi ^{\perp}$ is $(3, 2)$.

\subsection{}
\label{SS:iso4}
Given
$\eta = \displaystyle{\sum _{i < j} r_{ij}e_i \wedge e_j}
\in \wdg{2}V_{\Q}$, we can associate the
skew-symmetric matrix
$R_{\eta} = R = (r_{ij})\in \mr{Skew}(4, \Q )$, 
$r_{ij}= -r_{ji}$.
This assignment sets up a $\mr{GL}(4, \Q)$-equivariant
isomorphism 
\[
\bigwedge ^2 V \cong \mr{Skew}(4),
\]
with the action of  $g \in \mr{GL}(4, \Q)$ given on the
skew-symmetric matrices as
\[
R \to g R \transp g.
\]
The form $\psi$ above maps to $\Psi$. Under this 
isomorphism the form $b$ in section \ref{SS:iso2} goes over
into the form $b_0$ of section \ref{SS:iso3}, 
ie., $b(\xi, \eta) = b_0 (R_{\xi}, R_{\eta})$. In 
coordinates, 
\[
b_0 (M, N) =   m_{12} n_{34} + m_{34}n_{12} + m_{23} n_{14}
+ m_{14} n_{23}- m_{24} n_{13} - m_{13}n_{24}.
\]
$\Psi ^{\perp}$ is defined by 
$m_{13}+m_{24} = 0 $, $\psi ^{\perp}$ is defined by
$r _{13}+r_{24} = 0$ and these bilinear forms restrict
to this subspace as:
\[
b_0 (M, N) =  2 m_{13} n_{13} + m_{12}n_{34} + m_{34} n_{12}
+ m_{14} n_{23}+ m_{23} n_{14}. 
\]

\subsection{}
\label{SS:iso5}
Here is a geometric
description of this. Consider, for any ring $R$,
the module $V_R = R^4$ with the standard alternating form $\Psi$.
Viewed this way, $V$ is
the affine space $\Aff{4}$ regarded as a scheme over the integers.
It is known that the
Grassmannian of 2-planes through the origin
${\bf Gr}_2 (V) = {\bf Gr} (2, 4) $, or
 equivalently the Grassmannian of lines in $\Proj{3}$,
${\bf Gr}_1 ({\bf P}(V)) =  {\bf PGr} (1, 3) $ is canonically imbedded
\[
{\bf Gr}_2 (V)  = {\bf Gr} (2, 4)\ \longrightarrow\
{\bf P} \left(\wdg{2} V  \right) = \Proj{5}
\]
whose image is a quadric. In coordinates $(x_1 , x_2 , x_3 , x_4 )$
on $\Aff{4}$, a plane $ h =x \wedge y$ is mapped to the vector of  2 by 2
minors
\[
p_{ij}(h)\ =\ \left\vert \begin{array}{cc}
                                     x_i & x_j \\
                                     y_i & y_j
                                        \end{array}
                      \right\vert
\]

The quadric is given by Pl\"ucker's relation
\[
Q(p)\ =\ p_{12} p_{34}\   - \ p_{13} p_{24}\   + \ p_{14} p_{23}\   =\ 0
\]
The alternating form $\Psi$ is given by the formula
$p_{13}  + p_{24}$, therefore a 2-plane through the origin
$\Aff{4}$ is isotropic for this skew-form
iff $p_{13} (h) + p_{24}(h) = 0$ . In other words, the variety
of isotropic lines in $\Proj{3}$ is given by these two equations.
We can think of this scheme ${\bf Gr}_2 (V, \Psi)$ as the
3-dimensional quadric in
$[ p_{12} ,  p_{13},  p_{14},  p_{23},  p_{24}]$ - space
given by
\[
q(p) \ = \  p_{13} ^2 \ +\ p_{12} p_{34}\   + \ p_{14} p_{23}\   =\ 0
\]
This variety is the dual symmetric space of the group
$G = \mr{Sp}(\Psi)$ and is thus isomorphic with
$G/Q$ where $Q$ is the parabolic subgroup stabilizing any
isotropic 2-plane.
In any ring where 2 is a unit this is easily seen to be equivalent
to the form $u_1 ^2 + u_2 ^2 + u_3 ^2 - u_4 ^2 - u_5 ^2 $, of
signature (3, 2).
\subsection{}
\label{SS:iso6}
Now ${\mr{GSp}} (V,\, \Psi)$ operates on $V$ fixing
$\Psi$ up to scalar multiple,
hence it acts on $\mathop{\wedge ^2}V$ fixing the subvarieties
$Q(p) = 0$, $p_{13}  + p_{24} = 0$, and hence fixing the variety $q(p) = 0$.
We therefore obtain a homomorphism from the symplectic similitudes to the
orthogonal similitudes of $q$, but as
 $\mr{GSp} (\Psi)$ is connected, we get a map
\[
{\mr{GSp}} (V, \Psi)\ \longrightarrow\ {\mr{GO}}_0 (q)
\]
which is an isogeny. The quadratic form $q$ is essentially
$b_{0, \psi}$ above. More precisely,
consider the wedge product
\[
\wdg{2} V \times \wdg{2} V\ \longrightarrow\
\wdg{4} V\ \simeq {\bf G}_a
\]
where the last isomorphism is gotten by the determinant.
Choose any basis $\{e_i\}$ of
$V_{\Z} = \Z ^{4}$. Then it is easily seen that
the quadratic form associated to this pairing is
$$2 Q(a) = 2( a_{12} a_{34}  - a_{13} a_{24}  + a _{14} a_{23})$$
Consider $\psi  =
 e_1 \wedge e_3 + e_2 \wedge e_4$.
Then the orthogonal
$U = \psi ^{\perp}$ carries the induced form
$2 (b^2 - ac - de)$ in the basis
\[
\{f_1 , f_2 , f_3 , f_4 , f_5 \}
=
\{ e_1 \wedge e_2 ,
 e_1 \wedge e_3 -  e_2 \wedge e_4  ,
-e_3 \wedge e_4,
 -e_1 \wedge e_4 ,  e_2 \wedge e_3\}
\]
For our purposes it is better to work with the form on the dual lattice
spanned by $\{f_1 , (f_2  )/2, f_3 , f_4 , f_5 \}$, or rather twice it,
given in this basis as
\[
\Delta (a, b, c, d, e)\ =\ b^2 - 4ac - 4de
\]
This is the quadratic form that appears in the theory of Humbert
surfaces.
\par

\section{Symmetric spaces and Humbert surfaces}
\label{S:sym}
\subsection{Symplectic viewpoint}
\label{SS:sview}
\subsubsection{}
\label{SSS:sview1}
The underlying analytic space $A$ of a complex abelian variety 
over $\C$ of dimension $n$ is a quotient $A = \C ^n/L$ where
$L \subset \C^n$ is a lattice.
Thus $L \otimes \R := V = \C^n$ has a complex structure $J$ 
($J^2 = -1$).
We have canonically $L = \mr{H}^1 (A, \Z)$, and 
$\mr{H}^s(A, \Z) = \mr{Hom}(\bigwedge ^s L, \Z)$ for all $s$. 
There is a Riemann form for $A$, or polarization, ie.,
an alternating bilinear form $\psi : L \times L \to \Z$ such that 
\begin{enumerate}
\item[1.]  $\psi (Ju, Jv) = \psi (u, v)$ for all $u, v \in V$. 
\item[2.] $\psi (v, Jv) > 0$ for all $0\ne v \in V$.
\end{enumerate}   
The first condition for a polarization is that
$\psi \in \mr{H}^2 (A, \Z)\cap \mr{H}^{1, 1}(A)$ in the 
Hodge structure on cohomology. Recall that 
the existence of a complex structure on $V$ is equivalent to the existence
of a Hodge structure with 
\[
V_{\C} = V\otimes \C = V^{-1, 0}\oplus V^{0, -1}
\]
where $ V^{-1, 0}$ (resp.  $ V^{0, -1}$) is the $+i$
(resp. $-i$) eigenspace for $J$. As is well known, 
the cohomology of $A$ then has a $\Z$-Hodge structure, 
with $\mr{H}^1 (A, \R)$ being the dual $\check{V} $. 
There is a canonical isomorphism
\[
A = \mr{H}_1(A, \Z)\backslash \mr{H}_1(A, \C)/F^0
=\mr{H}_1(A, \Z)\backslash  V^{-1, 0}. 
\]
The holomorphic tangent space at $0$ is canonically identified
$T_0 A  = \mr{H}_1(A, \C)/F^0 = V^{-1, 0}$. In coordinates
$z_1, ..., z_n$ we get a basis 
$\partial /\partial z_1, ..., \partial /\partial z_n$
and thus a dual basis $dz_1, ..., dz_n$ of 
$T^*_0  A= \mr{H}^0 (A, \Omega _{A/\C} )= \check{V}^{1, 0} = 
F^0 (\mr{H}^1(A, \C))$. 
The natural map 
$L = \mr{H}_1 (A, \Z) \to \mr{H}^1 (A, \R) \cong
\mr{Hom}_{\C}(\mr{H}^0 (A, \Omega _{A/\C} ), \C)
$ sends $\gamma $ to the functional $\omega \mapsto 
\int _{\gamma}\omega$, so that $L$ is the lattice of periods.  
\[
\begin{CD}
L &=&\mr{H}_1(A, \Z) \\
@VVV&&\\
V &=&\mr{H}_1(A, \R)@>\sim>> \mr{Hom}_{\C}(F^0\mr{H}^1, \C)\\
@VVV && @AAA\\
V_{\C} &=&\mr{H}_1(A, \C)@>>> \mr{Hom}_{\C}(\mr{H}^1(A, \C), \C)
\end{CD}
\]

\subsubsection{}
\label{SSS:sview2}
Let $\psi$ be a principal polarization, i.e., $\psi$ induces
an isomorphism $L \to \check{L} = \mr{Hom}(L, \Z)$. We may then find  
a basis $e_1, ..., e_{2n}$ of $L$ such that 
$\psi (e_i, e_j) = 0$ unless $|i-j| = n$, and 
$\psi (e_i, e_{n+i}) = 1$, for all $i = 1, ..., n$. 
It is also well-known 
that we may find a basis $\omega _1, ..., \omega _n$ for 
$\mr{H}^0(A, \Omega _{A/\C})$ such that the $n\times 2n$ period matrix 
$\int _{e_j}\omega _i$ has the shape $(\tau, 1_n)$ for some 
$\tau \in \mfr{H}_n$. Since we have 
$\omega _i  = \sum _{j = 1}^{2n}(\int _{e_j}\omega _i ) \check{e}_j$, 
once we fix the symplectic basis $e_i$ we can regard the row span 
$F_{\tau}$ of the matrix $(\tau, 1_n)$ as the subspace
\[
F_{\tau} = \mr{H}^0 (A, \Omega _{A/\C}) \subset \mr{H}^1(A, \C) = \C^n. 
\]
Thus there is an isomorphism
\[
\tau \mapsto F_{\tau}: 
\mfr{H}_n \simeq \mr{Gr}_n ^{+} (\check{V}_{\C}), 
\] 
where $\mr{Gr}_n ^{+} (\check{V}_{\C})$ is the Grassmannian 
of $n$-dimensional complex subspaces $F \subset \check{V}_{\C}$
such that 
\begin{itemize}
\item[a)] $F$ is $\check{\psi}$-isotropic, i.e.,
$\check{\psi}(x, y)= 0 $ for all $x, y\in F$.
\item[b)] $-i\check{\psi} (x, \bar{x})  >  0 $ for all $0\ne x\in F$,
where $\bar{x} $ denotes conjugation relative to $V_{\R}$.
\end{itemize}
Here, $\check{\psi}$ is the dual alternating form on $\check{V}$. 
Then $F_{\tau}$ is $F^0 \mr{H}^1(A_{\tau}, \C)$ for the Hodge 
structure on the principally polarized abelian variety
$A_{\tau} = \C^n /L_{\tau}$, where $L_{\tau}  = \Z^n + \Z^n \tau$. 
\subsubsection{}
\label{SSS:sview3}
Now we specialize the preceding to the case $n=2$.
Let
$T_{\Z} = \wdg{2}\check{V}_{\Z}$, 
Then $\tau \in \mfr{H}_2$ defines a Hodge structure of 
dimension $6$
\[
(T_{\C},T_{\Z}, F_{\tau})
\mr{\  of\  type\ } (2, 0)+(1, 1)+(0, 2).
\]
such that $T_{\Z} = \mr{H}^2 (A_{\tau}, \Z)$. 
Note that a real form $\eta \in \wdg{2}\check{V}_{\R}$ has type
$(1, 1)$ if and only if $\eta (u, v) = 0$ for all
$u, v \in F_{\tau}$. Since $F_{\tau }$ is generated by the
rows of $(\tau , 1)$ we see that a real form $\eta$ is of type
$(1, 1) $ if and only if (notation as in section \ref{SS:iso4})
\[
(\tau , 1) R_{\eta}\begin{pmatrix}\tau \\ 1 \end{pmatrix} = 0.
\]
Since the N\'eron-Severi group of $A_{\tau }$ is isomorphic with
$ \mr{H}^2 ( A_{\tau }, \, \Z )\cap \mr{H}^{1, 1}(A_{\tau})$ we see

\[
\mr{NS}( A_{\tau }) \cong
\{
R \in \mr{Skew}(4, \Z ) :
(\tau , 1) R \begin{pmatrix}\tau \\ 1 \end{pmatrix} = 0
\}.
\]
Now let $\mr{Skew}(4,  R)_0:= \Psi ^{\perp} \subset \mr{Skew}(4, R)$, for any
commutative ring $R\subset \C $, where the orthogonal is taken with respect to 
the inner product in section \ref{SS:iso3}.  

We define, for any $ X \in \mr{Skew} (4, \R )_0$
\[
\Sg{2} \supset \mfr{H}_X := \{\tau \in \Sg{2} :
(\tau , 1) X\begin{pmatrix}\tau \\ 1 \end{pmatrix} = 0 \}.
\]
Clearly $\mfr{H}_X =\mfr{H}_{t X} $ for any $t \in \R ^*$.
It can be shown that this set is nonempty if and only
if $\Delta (X) > 0$, in which case it is isomorphic with
$\Sg{1}\times \Sg{1}$.
When the entries of $X$ are integers without common divisor,
we call this the Humbert surface associated to $X$, provided it is nonempty,
and we call
$\Delta (X)$ the discriminant of the Humbert surface. It is a
positive integer congruent to 0 or 1 modulo 4.
\par
Dually we define, for any $\tau \in \Sg{2}$, and any 
subring
$A \subset \C$, 
\[
T^{1, 1}_{A} (\tau )  =  \{ X\in \mr{Skew}(4, \, A )  :
(\tau , 1) X\begin{pmatrix}\tau \\ 1 \end{pmatrix} = 0 \}.
\]

\subsection{Orthogonal viewpoint}
\label{SS:orthview}
\subsubsection{}
\label{SSS:orthview1}
The symmetric space for the group $\mr{SO}(3, 2)$ is
\[
\mr{Pos}_{3, 2}=
\{
Z \in \mr{M}_{5, 3}(\R ) : \transp MI_{3, 2} M > 0
\},
\quad
I_{3, 2} =
\begin{pmatrix}
1_3 & 0\\
0 & -1_2
\end{pmatrix}
\]
More generally, replacing the split form $I_{3, 2}$ by any
real symmetric $B$ of signature $(3, 2)$, the condition is that
the real symmetric 3 by 3 matrix $\transp M B M $ is positive definite.
More intrinsically, fix a 5-dimensional real vector space
$T_{\R}$ with a symmetric bilinear form $b$ of signature
$(3, 2)$. Then the symmetric space is the open subset of the Grassmannian
of 3-planes in $T_{\R}$:
\[
\mr{Gr}^{+}_3 (T_{\R}) :=
\{
U \subset T_{\R} : \dim U = 3,\  b \mid U > 0
\}
\]
By choosing an orthonormal basis we can 
identify $T_{\R} = \R ^ 5$, and 
$\mr{Pos}_{3, 2}$ with $\mr{Gr}^{+}_3 (T_{\R})$ by 
assigning to $M$ the subspace of $\R ^ 5$ spanned by the rows
of $M$.
\subsubsection{}
\label{SSS:orthview2}
For any commutative ring $R$ let $V_R = R^4$ with 
basis $\underline{e} = \{e_1, e_2, e_3, e_4\}$. Let
$W = \mr{Hom}(V, \mbf{G}_a)$ be the dual with 
dual coordinates $\check{e}_i$. Let
$\psi \in \wedge ^2 (W) = \mr{Hom}(\wedge ^2(V), \mbf{G}_a)$
be the alternating bilinear form whose matrix in the 
coordinates $\underline{e} $ is 
\[
\Psi = \begin{pmatrix}
0 & 1 _2\\
-1_2 & 0
\end{pmatrix}.
\]
Each point $\tau \in \mfr{H}_2$ determines a Hodge structure 
of type $(1, 0), (0, 1)$ on $W$, ie.,
\[
W^{\tau} = \left (
W_{\C}= W^{1, 0}\oplus W^{0, 1}\supset W_{\R} \supset
          W_{\Z}.
            \right )
\]
Concretely, $W^{0, 1} = F_{\tau}^0 (W_{\C})$ is the span 
of the rows of the  matrix $(\tau, 1_2)$. This is the canonical 
Hodge structure  on $\mr{H}^1(A_{\tau})$ for the 2-dimensional 
abelian variety $A_{\tau} = \C ^2 /\Z^2 \tau + \Z ^2$. The 
form $\psi$ can be identified to an element of 
$\mr{H}^2 (A_{\tau}, \Z)\cap \mr{H}^{1, 1}(A_{\tau})$, and is 
a principal polarization. 

\subsubsection{}
\label{SSS:orthview3}
Each $\tau\in \mfr{H}_2$ thus gives a Hodge structure on any 
tensor space of $W$. In particular, consider 
$\wedge ^2 (W)$. Since $\psi \in \wedge ^2 (W)$ is of 
type $(1, 1)$ for this Hodge structure, and since 
the bilinear form of section \ref{SS:iso2} is a morphism 
of Hodge structures, the orthogonal space
$T := \psi ^{\perp}\subset \wedge ^2 (W)$ carries a 
$\Z$-Hodge structure. We have seen that the bilinear form
denoted $b_{\psi}$ on $T_{\R}$ has signature $(3, 2)$. Thus, 
any $\tau\in \mfr{H}_2$ gives a Hodge structure
\[
T^{\tau} = \left (
T_{\C} = T^{2, 0}\oplus T^{1, 1}\oplus T^{0, 2}\supset
T_{\R}\supset T_{\Z}
           \right )
\]
The space $T^{1, 1}$ is the complexification of a 
real $3$-dimensional subspace $Z_{\tau}\subset T_{\R}$ and it is known that 
$b_{\psi} \mid Z_{\tau}$ is positive-definite.

\begin{prop}
\label{P:somoduli}
The map $\tau \to Z_{\tau}$
sets up an isomorphism
\[
 \Sg{2}  \simeq \mr{Gr} ^{+}_3 (T_{\R})
\]
This map is equivariant, 
via the isogeny $\rho : \mr{Sp}_4 (\R) \to 
\mr{SO}_0 (b_{\psi}) = \mr{SO}_0 (3, 2)$:
$Z_{g\tau} = \rho (g) Z_{\tau}$. 
\end{prop}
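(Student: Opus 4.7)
The plan is to deduce the isomorphism from three observations: well-definedness (the image lies in $\mr{Gr}^{+}_3(T_\R)$), equivariance under $\rho$ (which is tautological from the construction), and then bijectivity as a map between homogeneous spaces for isogenous Lie groups.

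Step 1 (well-definedness). For any $\tau \in \Sg{2}$ the Hodge structure on $W_\C$ has $\dim W^{1,0} = \dim W^{0,1} = 2$, so $\wedge^2 W_\C$ has Hodge numbers $(h^{2,0},h^{1,1},h^{0,2}) = (1,4,1)$. Since $\psi \in \wedge^2 W_\Z$ is a real class of type $(1,1)$ for every such $\tau$ (this is precisely the property built into the definition of $\Sg{2}$, namely that $\psi$ is a polarization), its $b$-orthogonal complement $T$ inherits a sub-Hodge structure with numbers $(1,3,1)$. Thus $Z_\tau = T^{1,1} \cap T_\R$ is a real $3$-dimensional subspace of $T_\R$. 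Positivity of $b_\psi$ on $Z_\tau$ is then a restatement of the second Riemann bilinear relation for the primitive weight-$2$ piece of the cohomology of the polarized abelian surface $A_\tau$; in practice one verifies it once at the basepoint $\tau_0 = i\mbf{1}_2$ by a direct calculation using the explicit formula for $b_\psi$ given in section \ref{SS:iso4}, then propagates the conclusion by the equivariance of Step 2.

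Step 2 (equivariance). By Proposition \ref{P:symplorth}, $\rho$ is nothing but the restriction to $\mr{Sp}_4(\R) \subset \mr{GL}(V_\R)$ of the natural action on $\wedge^2 W_\R$, further restricted to the invariant subspace $T_\R = \psi^\perp$. Since the Hodge datum on $W$ attached to $\tau$ is encoded in the Lagrangian $F^0_\tau \subset W_\C$, and since the standard action of $\mr{Sp}_4(\R)$ on $\Sg{2}$ satisfies $F^0_{g\tau} = g \cdot F^0_\tau$, all induced Hodge structures on tensor powers of $W$ transform equivariantly. In particular $\rho(g) Z_\tau = Z_{g\tau}$ for all $g \in \mr{Sp}_4(\R)$ and $\tau \in \Sg{2}$.

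Step 3 (bijectivity). Both $\mr{Sp}_4(\R)$ on $\Sg{2}$ and $\mr{SO}_0(b_\psi)$ on $\mr{Gr}^{+}_3(T_\R)$ act transitively with connected target. By Step 2 together with the surjectivity of $\rho$, the image $\{Z_\tau : \tau \in \Sg{2}\}$ is a single $\mr{SO}_0(b_\psi)$-orbit, hence all of $\mr{Gr}^{+}_3(T_\R)$. For injectivity, I choose the basepoint $\tau_0 = i\mbf{1}_2$ and its image $Z_0$. The stabilizer of $\tau_0$ in $\mr{Sp}_4(\R)$ is the maximal compact $K \cong U(2)$ described in section \ref{SS:siegel1}; the stabilizer of $Z_0$ in $\mr{SO}_0(b_\psi)$ is $\mr{SO}(3) \times \mr{SO}(2)$; and by Proposition \ref{P:symplorth}, $\rho$ maps $K$ onto this second group with kernel $\{\pm \mbf{1}_4\} \subset K$. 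Passing to coset spaces therefore turns $\tau \mapsto Z_\tau$ into a bijection. The only nontrivial ingredient in this plan is the positivity in Step 1; once the sign conventions of section \ref{SS:iso2} are fixed, it reduces to a one-line computation at $\tau_0 = i\mbf{1}_2$.
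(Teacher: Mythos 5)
Your proof is correct, and it is worth noting that the paper itself offers no proof of Proposition \ref{P:somoduli} at all: the surrounding text simply asserts that $T^{1,1}$ is the complexification of a real $3$-plane on which $b_{\psi}$ ``is known'' to be positive definite, and the proposition is stated without argument. Your homogeneous-space strategy --- check well-definedness, observe that equivariance is built into the functoriality of Hodge structures under $\wedge^2$, then get surjectivity from transitivity of $\mr{SO}_0(b_\psi)$ on $\mr{Gr}^+_3(T_\R)$ and injectivity from the matching of stabilizers $U(2)\to \mr{SO}(3)\times\mr{SO}(2)$ with kernel $\{\pm\mbf{1}_4\}$ --- is the standard and expected one, and it does yield an isomorphism of manifolds (not merely a bijection) since it identifies two homogeneous spaces under an isogeny. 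The one step you defer, positivity at $\tau_0 = i\mbf{1}_2$, does check out: with $W^{0,1}$ spanned by $i\check{e}_1+\check{e}_3$ and $i\check{e}_2+\check{e}_4$, one finds that $Z_{\tau_0}$ is spanned by $\check{e}_1\wedge\check{e}_3-\check{e}_2\wedge\check{e}_4$, $\check{e}_1\wedge\check{e}_4+\check{e}_2\wedge\check{e}_3$, and $\check{e}_1\wedge\check{e}_2+\check{e}_3\wedge\check{e}_4$, which are mutually orthogonal and on each of which the form $2Q$ of section \ref{SS:iso6} takes the value $2$. A slicker route to positivity, which avoids the orientation conventions your appeal to the Hodge--Riemann relations would otherwise have to track, is to note that those relations only need to give \emph{definiteness} of $b_\psi$ on the real primitive $(1,1)$ classes: a $3$-dimensional definite subspace of a space of signature $(3,2)$ is forced to be positive definite. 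Finally, in Step 3 you should make explicit that $\rho(K)=\mr{SO}(3)\times\mr{SO}(2)$ (it is a compact connected $4$-dimensional subgroup of the $4$-dimensional connected stabilizer of $Z_0$, so equality holds) and that injectivity uses $\rho^{-1}(\mr{Stab}(Z_0))=K$, which follows because $\ker\rho=\{\pm\mbf{1}_4\}\subset K$; these are the two small facts that Proposition \ref{P:symplorth} does not literally state.
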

Note that we have a canonical equivariant isomorphism
$\mr{Gr} ^{+}_3 (T_{\R}) =\mr{Gr} ^{-}_2 (T_{\R})$
where the right-hand side is the Grassmannian on
$2$-planes $Z'$ in $T_{\R}$ such that $b_{\psi} \mid Z'$ is negative-definite:
let $Z' = Z ^{\perp}$.

\subsubsection{}
\label{SSS:orthview4}
In section \ref{S:special} the letter $V$ represents a $5$-dimensional 
real vector space with a quadratic form 
$(\phantom{x}, \phantom{y})$ of signature $(3,2)$, which corresponds
to $T_{\R}$ here, and the lattice $L$ in that section corresponds to 
$T_{\Z}$. Also in the notations of that section, $D= \mr{Gr} ^{-}_2 (T_{\R})$. For any $x \in L = T_{\Z}$ with 
$(x, x) > 0$, let 
$U = \R .x \subset V = T_{\R}$. Under the natural identification
$ x \mapsto X: T_{\Z} = \mr{Skew}(4, \Z)_0$ (see sections \ref{SS:iso3}, 
\ref{SSS:sview3}), the Humbert surface $\mfr{H}_X$ maps to 
the special locus $D_U$ of section \ref{S:special}. This is because:
\begin{align*}
Z' \in D_U &\Longleftrightarrow Z'\subset U^{\perp}\\
            &\Longleftrightarrow U\subset(Z')^{\perp}:=Z\\
	    &\Longleftrightarrow U\subset Z_{\tau}
	    \mr{\ for\ a \ unique\ }\tau \in \mfr{H}_2\\
	    &\Longleftrightarrow x \mr{\ is\ of\ type\ } (1, 1)
	    \mr{\ in\  the\  Hodge\ structure\ } T^{\tau}\\ 
	    &\Longleftrightarrow 
	    (\tau , 1) X\begin{pmatrix}\tau \\ 1 \end{pmatrix} = 0\\
	     &\Longleftrightarrow \tau \in \mfr{H}_X.
\end{align*}

\section{Cohomological unitary representations}
\label{S:cohrep}
This section records the basic facts relevant to us
from Vogan-Zuckerman theory. These results are well-known
in that they have appeared in print on multiple occasions, 
but with no proofs. We do not provide complete proofs 
either, but at least some more detail. It is convenient
for us to work with the orthogonal as opposed to the 
symplectic viewpoint. So 
$\mfr{g} = \mfr{sp}_4 (\R) = \mfr{so}(3, 2)$. 

\subsection{}
\label{SS:cohrep1}
\[
 \mfr{so}(3, 2) = \left \{
 \begin{pmatrix}
A &B\\
C&D
\end{pmatrix} :
A = -\phantom{}^t A \in \mr{M}_{3, 3}(\R),\, 
D = -\phantom{}^t D \in \mr{M}_{2, 2}(\R),\,
B \in \mr{M}_{3, 2}(\R), \, C = \phantom{}^t B
 \right\}
\]
The Cartan decomposition $\mfr{g}= \mfr{k}\oplus \mfr{p}$
has $\mfr{k} = \{ B = C = 0\} = \mfr{so}(3) \times \mfr{so}(2)$, 
and 
\[
\mfr{p} = \{ A = D = 0\} \cong \mr{M}_{3, 2}(\R)\quad
\mr{via\ }
 \begin{pmatrix}
0&B\\
\phantom{}^t B&0
\end{pmatrix}\mapsto
B. 
\]
The action of $(A, D)\in \mr{O}(3)\times \mr{O}(2)$ by
conjugation on $\mfr{p}= \mfr{g}/\mfr{k} = \mr{M}_{3, 2}(\R)$ is 
\[
X \mapsto AXD^{-1}. 
\]
The complex structure on $\mfr{p}= \mr{M}_{3, 2}(\R)$ is
given by right multiplication by 
\[
J = \begin{pmatrix}
0&1\\
-1&0
\end{pmatrix}\quad
\mr{so\ that\ } \mfr{p}_{\C} = \mfr{p}^{+}\oplus
\mfr{p}^{-}, \ 
\mfr{p}^{\pm} = \pm i \mr{\ eigenspace\ of\ }J.  
\]

\subsection{}
\label{SS:cohrep2}
A compact maximal torus for $\mfr{g}$ is
\[
\mfr{t} = \left \{
 \begin{pmatrix}
x_1 J &\begin{matrix} 0\\0 \end{matrix} & 
\begin{matrix} 0&0\\ 0&0 \end{matrix}\\
\begin{matrix} 0&0\end{matrix}& 0 &
\begin{matrix} 0&0\end{matrix}\\
\begin{matrix} 0&0\\ 0&0 \end{matrix}
& \begin{matrix} 0\\0 \end{matrix}&
x_2 J
\end{pmatrix} :x_1, \ x_2 \in \R
 \right\}  = 
 \{
 [x_1,\ x_2] 
 \}.
\] 
The roots are
\[
\Delta (\mfr{g}_{\C}, \mfr{t}) = 
\{
\pm \alpha,\  \pm \beta, \ \pm \alpha \ \pm \beta 
\}
\]
where $\alpha ([x_1, x_2]) = i\, x_1$, 
$\beta ([x_1, x_2]) = i\, x_2 $.
We have 
\begin{align*}
\mfr{k}_{\C}  &= \mfr{t}_{\C} \oplus \mfr{g}_{\alpha }\oplus 
        \mfr{g}_{-\alpha }               \\
\mfr{p}^{+} &=  \mfr{g}_{\beta }\oplus 
 \oplus \mfr{g}_{\alpha +\beta}\oplus  \oplus \mfr{g}_{-\alpha +\beta} \\
 \mfr{p}^{-} &=  \mfr{g}_{-\beta }\oplus 
 \oplus \mfr{g}_{\alpha -\beta}\oplus  \oplus \mfr{g}_{-\alpha -\beta} 
\end{align*}
Writing $\mfr{g}_{\gamma} = \C X _{\gamma}$, we have:
\begin{align*}
X_{\alpha} = 
\begin{pmatrix}
 0&0&1&0&0\\
 0&0&i&0&0\\
  -1&-i&0&0&0\\
   0&0&0&0&0\\
    0&0&0&0&0\\
\end{pmatrix}
\quad\quad
&
X_{\beta} =   \begin{pmatrix}
 0&0&0&0&0\\
 0&0&0&0&0\\
  0&0&0&i&-1\\
   0&0&i&0&0\\
    0&0&-1&0&0\\
\end{pmatrix}\\
\\
X_{\alpha +\beta} = 
\begin{pmatrix}
 0&0&0&-i&1\\
 0&0&0&1&i\\
  0&0&0&0&0\\
   -i&1&0&0&0\\
    1&i&0&0&0\\
\end{pmatrix}
\quad\quad
&
X_{\alpha -\beta} =   \begin{pmatrix}
 0&0&0&i&1\\
 0&0&0&-1&i\\
  0&0&0&0&0\\
   i&-1&0&0&0\\
    1&i&0&0&0\\
\end{pmatrix}
\end{align*}
The map $i \mapsto -i$ sends root/spaces
$\gamma \to -\gamma$.

\subsection{}
\label{SS:cohrep3}
The unitary representations with nonzero $(\mfr{g}, K)$-cohomology
are of the form $A_{\mfr{q}}$ for $\theta$-stable parabolic
subalgebras $\mfr{q}\subset \mfr{g}_{\C}$ (more generally
$A_{\mfr{q}}(\lambda)$ for coefficients in a local system).
These parabolics can be taken up to $K$-conjugation. Each such
$\mfr{q}$ can be constructed by choosing a $x \in i \mfr{t}$
and defining 
\begin{align*}
\mfr{q}&= \mr{sum\ of\ the\ nonnegative\ eigenspaces\ of\ ad}(x).\\
\mfr{l}&= \mr{sum\ of\ the\ zero\ eigenspaces\ of\ ad}(x).\\
\mfr{u}&= \mr{sum\ of\ the\ positive\ eigenspaces\ of\ ad}(x).\\
\end{align*}
Then we have a Levi decomposition $\mfr{q} = \mfr{l}\oplus\mfr{u}$. 
Also, if $R^{\pm} = \dim (\mfr{u}\cap \mfr{p}^{\pm})$ and
$p-R^+ = q-R^- = j \ge 0$
\[
\mr{H}^{p, q}(\mfr{g}, K; \C) = 
\mr{Hom}_{L\cap K}(\wedge ^{2j}(\mfr{l}\cap \mfr{p}), \C).
\]
If $\mfr{f}\subset \mfr{q}$ is any subspace stable under
$\mr{ad}(\mfr{t)}$, we let $\rho(\mfr{f})$ be as usual half
the sum of the roots of $\mfr{t}$ occurring in $\mfr{f}$. 
Then for a $\theta$-stable parabolic $\mfr{q}$ it is known 
that if a representation of $\mfr{k}$ with highest weight 
$\delta \in \Delta ^+ (\mfr{k}, \mfr{t})$ occurs in 
$A_{\mfr{q}}$, then 
\[
\delta = 2 \rho (\mfr{u}\cap \mfr{p}) +
\sum_{\gamma \in \Delta (\mfr{u}\cap\mfr{p})}n_{\gamma} \gamma ,
\]
for integers $n_{\gamma}\ge 0$, and the representation of 
$K$ with highest weight $2 \rho (\mfr{u}\cap \mfr{p}) $
exists and occurs in $A_{\mfr{q}}$ ($K$ is the connected 
Lie group with Lie algebra $\mfr{k}$).
\subsection{}
\label{SS:cohrep5}
The cohomological representations
are gotten by choosing, respectively $x \in i\, \mfr{t}$ as follows
(see \cite[pp. 91-92]{HL}): 
\begin{align*}
& x = [0, 0], \ \ L \cong \mr{SO}_0 (3, 2), \ \ 
 \mr{nonzero\  in \ bidegrees\ }
(j, j)\mr{\ for\ } 0\le j \le 3. \\
& x = [-i x_1, 0], \ x_1 > 0,\ \  L \cong S^1 \times\mr{SO}_0 (1, 2), \ \ 
\mr{nonzero\  in \ bidegrees\ }
(j, j)\mr{\ for\ } 1\le j \le 2. \\
& x = [-i | x_2 |, i x_2], \ x_2 \ne  0,\ \  L \cong \mr{U} (1, 1), \ \ 
\mr{nz\  in \ bideg\ }
(2, 0), (3, 1), \mr{\ if\ } x_2 < 0; (0, 2),(1, 3) \mr{\ if\ } x_2 > 0.\\ 
& x = [-i x_1, i x_2], \ x_1 >  |x_2|\ne 0,\ \  
L \cong S^1 \times \mr{U} (0, 1), \ \ 
\mr{nz\  in \ bideg\ }
(2, 1), \mr{\ if\ } x_2 < 0; (1, 2), \mr{\ if\ } x_2 > 0. \\
& x = [-i x_1, i x_2], \  |x_2|> x_1 >  0,\ \  
L \cong S^1 \times \mr{U} (0, 1), \ \ 
\mr{nz\  in \ bideg\ }
(3, 0), \mr{\ if\ } x_2 < 0; (0, 3), \mr{\ if\ } x_2 > 0.
\end{align*}
A complete list even with nontrivial local coefficients
can be found in \cite{rT}. 
\subsection{}
\label{SS:cohrep4}
If we choose $x = [-i\, x_1, 0] \in i\, \mfr{t}$ with 
$x_1 > 0$, we find 
\[
\mfr{l} = \mfr{t}_{\C} \oplus \mfr{g}_{\beta}\oplus \mfr{g}_{-\beta},
\quad 
\mfr{u} = \mfr{g}_{\alpha}\oplus \mfr{g}_{-\beta+\alpha}\oplus \mfr{g}_{\beta +\alpha}, \quad
R^{\pm} = \pm 1.
\]
The $K = \mr{SO}(3)\times \mr{SO}(2)$-representation 
$\mu (\mfr{q})$ with highest weight 
$2 \rho (\mfr{u}\cap \mfr{p}) = 2\alpha$ is the tensor 
product $\mbf{5}\otimes \mbf{1}$ of the irreducible 5-dimensional
representation of $\mr{SO}(3)$  with the trivial representation 
of $\mr{SO}(2)$. Since this representation occurs 
with multiplicity one in $\wedge ^{1, 1} \mfr{p}$ we see that 
\[
\mr{H}^{1, 1}(\mfr{g}, K; A_{\mfr{q}}) = 
\mr{Hom}_K (\wedge ^{1, 1} \mfr{p},\  \mu (\mfr{q})) = 
\mr{Hom}_{K\cap L} (\wedge ^{0} (\mfr{l}\cap \mfr{p}),\ \C )) 
= \C
\] 
is one-dimensional, and also that $\mu (\mfr{q})$ occurs 
with multiplicity one in $A_{\mfr{q}}$.

\subsection{}
\label{SS:cohrep6}
Let $\pi_{\frac{5}{2}}$ be the discrete series representation of $\tilde{SL}_2(\mathbb R)$ of lowest weight $\frac{5}{2}$.
Now we must show that the representation $\theta(\pi_{\frac{5}{2}})$ has the following properties:
\begin{itemize}
\item[i.]
The minimal $K$-type is the $5$ dimensional representation $\mbf{5}\otimes \mbf{1}$;
\item[ii.]
The infinitesimal character is equal to the infinitesimal character of the trivial representation, $(\frac{3}{2}, \frac{1}{2})$.
\end{itemize}
Then by the theorem of Vogan-Zuckerman, $\theta(\pi_{\frac{5}{2}})$ is the $A_{\mfr{q}}$ for which $\mr{H}^{1, 1}(\mfr{g}, K; A_{\mfr{q}})$ is nonvanishing. This is a direct consequence of a theorem of Jian-Shu Li (~\cite{Li0}).\\
\\
As a matter of fact, this can be seen by the following observations. First, back to Prop. ~\ref{P:harmonics}, the compact group $\tilde U(1) \subseteq \tilde{SL}_2(\mathbb R)$ acts on the constant functions by $x^{\frac{1}{2}}, (x \in \tilde{U}(1))$. It acts on the first three variables by $x^{\frac{3}{2}}, (x \in \tilde U(1))$. Consequently, it acts on polynomials of degree $2$ on the first $3$ variables by $x^{\frac{5}{2}}, (x \in \tilde U(1))$.  In addition, the polynomials of degree $2$ on the first $3$ variables give the first occurence of representations of weight $\frac{5}{2}$ for $\tilde U(1)$ in $\mathcal P$.  So the constituent $\mbf{5}\otimes \mbf{1}$ consists of the joint harmonics for $\tilde U(1)$ and for $ O(3) \times  O(2)$. By Howe's theorem, $\theta(\pi_{\frac{5}{2}})$ contains a unique $O(3) \times O(2)$-type $\mbf{5}\otimes \mbf{1}$. So  $\mbf{5}\otimes \mbf{1}$ is the $O(3) \times O(2)$-type that is of minimal degree $\theta(\pi_{\frac{5}{2}})$ in the sense of Howe. As we have seen from the proof of Prop. ~\ref{P:harmonics}, the $O(3) \times O(2)$-types of smaller degrees, $\mbf{3}\otimes \mbf{1}$ or $\mbf{1}\otimes \mbf{1}$, must not occur in $\theta(\pi_{\frac{5}{2}})$. Therefore,  $\mbf{5}\otimes \mbf{1}$ is also the minimal $O(3) \times O(2)$-type of $\theta(\pi_{\frac{5}{2}})$ in the sense of  Vogan.\\ 
\\
Secondly, the infinitesimal character of $\pi_{\frac{5}{2}}$ is $(\frac{3}{2})$, under the Harish-Chandra homomorphism. By a theorem of Przebinda ~\cite{PZ}, the infinitesimal character of $\theta(\pi_{\frac{5}{2}})$ can be obtained from $(\frac{3}{2})$ by adding an entry $\frac{1}{2}$. So
the infinitesimal character of $\theta(\pi_{\frac{5}{2}})$ is exactly $(\frac{3}{2}, \frac{1}{2})$.

\bibliographystyle{amsplain}

 \end{document}